\newcommand{\R}{\mbox{$\RR$}}
\renewcommand{\Im}{\operatorname{Im}}
\newcommand{\RR}{\mbox{$\mathbf R$}}
\newtheorem{lemma}{Lemma}[section]
\newtheorem{prop}[lemma]{Proposition}
\newtheorem{cor}[lemma]{Corollary}
\theoremstyle{definition}
\newenvironment{Def}
  {\pushQED{\qed}\Defx}
  {\popQED}
\newenvironment{exam}
  {\pushQED{\qed}\examx}
  {\popQED}
\theoremstyle{remark}
\newenvironment{rem}
  {\pushQED{\qed}\remx}
  {\popQED}
\newtheorem{rems}[lemma]{Remarks}
\title[]{The Bifurcation Coalescence Problem for Feedforward Coalescence Networks}
\author{Manuela Aguiar}
\address{Manuela Aguiar, Faculdade de Economia, Centro de Matem\'atica, Universidade do Porto,
Rua Dr Roberto Frias, 4200-464 Porto, Portugal.}
\email{maguiar@fep.up.pt}
\author{Pedro Soares}
\address{Pedro Soares,  Instituto Superior de Economia e Gest\~ao, Universidade de Lisboa,
Rua do Quelhas, n$^{\underline{o}}$ 6, 1200-781 Lisboa, Portugal.}
\email{psoares@iseg.ulisboa.pt}
\date{\today}
\begin{document}

\begin{abstract}

Consider two networks and combine them through the coalescence operation to get a larger network. 
Is it possible to study the steady-state bifurcations of the coalescence network by studying the steady-state bifurcations of the component networks? 
We conclude that this is not possible for general coalescence networks. 
We show, however, that in the case of feedforward coalescence networks this is possible and we cover the simplest cases.
In particular, we prove how the growth rate of the bifurcation branches in the feedforward coalescence network depends on the connections from one network to the other. 
\end{abstract}

\maketitle

\tableofcontents

\section{Motivation}

In general, real-world networks are large, which makes the study of their dynamical properties a difficult task.
An interesting methodology would be to be able to divide a network into smaller subnetworks, study their dynamical properties separately and then draw conclusions about the dynamics of the large network. This, in analogy, for example, with what is done in the analysis of electronic circuits.

Network motifs are small networks that appear recurrently as components of large networks and that can be considered as their building-block components.  Since it is easier to analyze motif dynamics, it is useful to be able to relate the dynamical properties of a large network with those of its motif components. 

In this work, among the binary operations that can be used to combine small networks in order to get larger ones, we choose to consider the coalescence operation.
Roughtly speaking, a network $N$ is the coalescence of two networks $N_1$ and $N_2$, which we denote by $N_1 \circ N_2$, if $N$ can be obtained by identifying (merging) one cell of $N_1$ with one cell of $N_2$, where the {\it coalescence cell} inherits the input and output edges of those two {\it merging cells}. We consider networks where the cells are all of the same type, there is only one type of edge and the edges can have associated weights.

We are interested in analyzing what conclusions can be drawn for steady-state bifurcations of the coalescence networks by studying the corresponding steady-state bifurcations in the component networks. 
Given a codimension-one steady-state bifurcation problem in a coalescence network $N=N_1 \circ N_2$ we ask whether the bifurcation condition for $N$ is necessarily a bifurcation condition for at least one of the networks $N_1$ or $N_2$. 
If the answer is positive, the next question we ask is to what extent it is possible to predict the bifurcation branches for $N$ having knowledge of the bifurcation branches for $N_1$ and/or $N_2$. 
We call this the {\it bifurcation coalescence problem}.

We conclude that for most coalescence networks the bifurcation coalescence problem has a negative answer.
The first question fails, the bifurcation conditions in an $N_1\circ N_2$-admissible system does not need to follow from a bifurcation condition in $N_1$ or $N_2$-admissible systems.
Even when a coalescent network shares a bifurcation condition with the component networks, we observe that, except for some specific cases, the bifurcation branches in the coalescence network cannot, in general, be obtained from the bifurcation branches in the component networks.
However, the bifurcation coalescence problem has some positive answers if the coalescence network has a feedforward structure, that is, is a {\it  Feedforward Coalescence Network (FFCN)} -- a network $N_1 \circ N_2$ such that 
the coalescence cell in $N_2$ has no inputs from the other cells in $N_2$ and, thus, $ N_1 \circ N_2$ has a feedforward structure where the first layer contains the cells in $N_1$ and the second layer the cells in $N_2$ excluding the coalescence cell.

Restricting the bifurcation coalescence problem to FFCNs and considering diffusive admissible systems, we first show that 
 the  bifurcation conditions for a FFCN $N=N_1 \circ N_2$  are the union of the bifurcation conditions for the networks $N_1$ and $N_2$. 
We then study three different cases:\\
(1) If the bifurcation occurs exactly in one of the component networks $N_1$ or $N_2$, then each bifurcation branch in the coalescence network is given by an unique extension of a bifurcation branch in the component network where the bifurcation occurs, Proposition~\ref{Prop:bif_cond_one}. \\
(2) When the bifurcation occurs in both component networks, we consider an explicit condition on the couplings between the merging cell and the other cells in $N_2$ for nilpotent bifurcations.
This {\it coupling condition} can be formulated as the couplings from the merging cell to the other cells in $N_2$ being a linear combination of the couplings between the other cells in $N_2$.
 When the bifurcation condition is met simultaneously in both networks, and it is simple in $N_2$,
if the above coupling condition does not hold, then each bifurcation branch in $N_1$ gives rises to two bifurcation branches in the coalescence network (Proposition~\ref{prop:extbifbra}).
Moreover, these bifurcation branches have a growth rate equal to $\sqrt{|\lambda|}$. 
This coincides with previous study on the bifurcations in feedforward networks where the amplified growth-rate bifurcation branches are studied, 
Soares~\cite{S18}.\\
(3) Finally, if the above coupling condition does hold, then each bifurcation branch in $N_1$ still extends to two bifurcation branches in the coalescence network (Proposition~\ref{prop:extbif2}). 
However, in this case, the bifurcation branches do not show the amplified growth-rate.
Thus the amplified growth-rate can be explained by the relation between the couplings between the merging cell with the other cells in $N_2$ and the couplings between the other cells in $N_2$.

The results mentioned above include non-degenerated conditions, as usual in bifurcation theory. 
These non-degenerated conditions explicitly depend on the network structure and the bifurcation branch that we are extending.
Fixing a network, the non-degenerated condition is satisfied for generic admissible system of that network or they fail for any admissible system of that network.
For each of our results, we present examples of networks where the non-degenerated conditions generically hold and the previous results can be applied.
Moreover, we present examples of networks such that the non-degenerated conditions do not hold and we illustrate how the methods introduced here can be applied in those cases.

We revisit a network analyzed in \cite{ADGL09}, where the authors used a different approach considering quotient networks instead of subnetworks. 
More concretely, given a coupled cell network there are subspaces, defined by equalities of cell coordinates, that are left invariant by all the coupled systems that are admissible by that network and that are designated by  the  {\it  synchrony subspaces} of the network. 
The restriction of an admissible system to a synchrony subspace is a coupled system that is admissible by a smaller network, a  {\it  quotient network}. 
In \cite{ADGL09}, the question is, given a codimension-one steady-state bifurcation problem in a network, if it is possible to get all the bifurcation branches for that network by studying the corresponding bifurcation problem in the restriction to its quotient networks where the bifurcation condition is valid. 
The conclusion is that the answer is negative, in general. 
One example, presented in \cite{ADGL09}, where the answer is negative is the network represented in Figure~\ref{Fig:coal_3-cell_net6}.
This network is a particular example of a {\it  Feedfoward Coalescence Network (FFCN)}.
Despite this network does not satisfy the non-degenerated condition, we illustrate how the methods introduced here can be used to study the bifurcation branches of that network.

The paper is organized as follows: Section 2 contains the definitions and results on coupled cell networks and coupled admissible systems that are needed here.
In particular, it is presented the relation between the linearization of the admissible systems at the origin and the (weighted) adjacency matrix and Laplacian matrix of the network. 
In Section 3 we consider FFCNs, first we prove that the eigenvalues of the Laplacian matrix $L_{N}$, with $N=N_1 \circ N_2$ a FFCN, are given by the union of the eigenvalues of the Laplacian matrices $L_{N_1}$ and $L_{N_2}$ of the component networks $N_1$ and $N_2$. 
Then, we study the bifurcation coalescence problem for FFCNs by considering admissible systems that are diffusive.
In Section 4, we end with some final remarks.

\section{Definitions and preliminary results}

We consider the coupled cell networks formalism of Golubitsky, Stewart and collaborators~\cite{SGP03,GST05,GS06} and its extension to weighted networks in Aguiar and Dias~\cite{AD18}.
Here, we focus these formalisms to the case where the networks have only one cell type and one edge type, uniform networks. 
 A (weighted) coupled cell network can be represented by a (weighted) directed graph, where the cells are placed at vertices (nodes) and the couplings are depicted by directed arrows (edges).


\begin{Def} \normalfont
A {\it (uniform coupled cell) network} $N$ consists of a finite nonempty set $C$ of {\it nodes} or {\it cells} and a finite nonempty subset $E= \{ (i,j):\ i,j \in C\}\subseteq C\times C$ of {\it edges} or {\it arrows}, where each pair $(i,j)$ represents an edge from cell $i$ to cell $j$.
And we write $N = (C,E)$.
In a {\it weighted (uniform coupled cell) network}, each edge $(i,j)$ has associated a weight $w_{ij} \in \RR$.
\end{Def} 

A network is said {\it connected} if the associated graph is connected -- there is an undirected path between any two vertices (cells). 
From now on, when we refer to a network we mean a connected network.

The topology of an $n$-cell weighted network $N$ can be given by an  $n \times n$ {\it (weighted) adjacency matrix}, $W_N = [w_{ij}]_{1 \le i,j \le n}$, where $w_{ij}$ is the weight of the edge from cell $j$ to cell $i$ or zero if there is no such edge. 
Thus, we can simplify notation and use simply $N = (C,E, W_N)$. 


The {\it input valency} of a cell $i \in C$, denoted by $v(i)$, is the sum of the weights of the edges directed to the cell $i$, that is, $v(i) = \sum_{j \in C} w_{ij}$. A network is said to be {\it regular} when all the network cells have the same input valency, that is, $v(i) = v(j)$, for all $i,j \in C$. 
This equivalent to the weighted matrix $W_N$ having constant row sum, say $v_W = \sum_{k=1}^n w_{ik}$, for $i= 1, \ldots, n$. 
In that case, we also say that $W_N$ is {\it regular} of {\it valency} $v_W$. 

Let $D_N$ be the $n \times n$ diagonal matrix with the principal diagonal entries $d_{ii}$ given by the input valencies $v(i)$. The {\it (weighted) Laplacian matrix} of $N$ is the matrix $L_N=D_N-W_N$. That is, $L_N = [l_{ij}]_{n\times n}$, with: 
$$l_{ij} = 
\left\{ 
\begin{array}{l}
-w_{ij} \mbox{  if }  i\not=j; \\
v(i) - w_{ii} \mbox{  if }  i=j\, .
\end{array}
\right.
$$

Following \cite{AR12}, we define {\it coalescence} on coupled cell networks:

\begin{Def}\label{def:coalescence}
A network $N = (C,E, W_N)$ is a {\it coalescence} of the networks $N_1 = (C_1,E_1, W_{N_1})$ and $N_2 = (C_2,E_2, W_{N_2})$, denoted by $N= N_1 \circ N_2$,  if
\begin{itemize}
\item[] $C =  C_1 \cup C_2$ and $\left(C_1 \cap C_2\right) = \{c\}$; 
\item[] $E = E_1 \cup E_2$.
\end{itemize}
and the (weighted) adjacency matrix $W_N=(w^N_{ab})_{a,b\in C}$ is the following:
\begin{itemize} 
\item[] $w^N_{cc}=w^{N_1}_{cc}+w^{N_2}_{cc}$,
\item[] $w^N_{ab}=w^{N_i}_{ab}$ if $a,b\in C_i$, $i=1,2$ and $(a,b)\neq (c,c)$.\qedhere 
\end{itemize}
\end{Def}

Given two disjoint networks $N_1$ and $N_2$, we can first rename the cells $c_1\in N_1$ and $c_2\in N_2$ to ``$c$'', then apply the coalescence of Definition \ref{def:coalescence}. 
This will corresponds to the coalesced graph obtained by merging $c_1$ and $c_2$.
The coalescence of two networks depends on the selected merging cells.
In order to make it clear, we say that $N$ is the coalescence network obtained by merging cells $c_1$ and $c_2$.
We will use the term {\it coalescence cell} to refer to both $c_1$, $c_2$ and $c$.
Here, we will always assume that the cells in $C_1$ are numbered such that the coalescence cell $c_1$ is the last one and that the cells in $C_2$ are numbered such that the coalescence cell $c_2$ is the first one.

\begin{exam}
As an example, consider Figure~\ref{Fig:coal_2-cell} where the coalescence network $N = N_1 \circ N_2$ is obtained by merging cells $2$ and $3$ of the two (general) 2-cell weighted network motifs $N_1$ and $N_2$.
 
\begin{figure}[!h]
\begin{center}
\begin{tabular}{ccc}
\begin{tikzpicture}
 [scale=.25,auto=left, node distance=1.5cm]
 \node[style={circle,draw}] (n1) at (4,0) {\small{1}};
  \node[style={circle,draw}] (n2) at (14,0) {\small{2}};
  \path
(n1) edge [loop above] node {{\tiny $w_{11}$}} (n1)
(n2) edge [loop above] node {{\tiny $w_{22}$}} (n2)
 (n1) edge[->, thick, bend left=30]  node  [above=0.1pt] {{\tiny $w_{21}$}} (n2)
 (n2) edge[->, thick, bend left=30,]  node  [below=0.1pt] {{\tiny $w_{12}$}} (n1); 
\end{tikzpicture}  \qquad & \qquad 
\begin{tikzpicture}
 [scale=.25,auto=left, node distance=1.5cm]
 \node[style={circle,draw}] (n3) at (4,0) {\small{3}};
  \node[style={circle,draw}] (n4) at (14,0) {\small{4}};
  \path
(n3) edge [loop above] node {{\tiny $w_{33}$}} (n3)
(n4) edge [loop above] node {{\tiny $w_{44}$}} (n4)
 (n3) edge[->, thick, bend left=30]  node  [above=0.1pt] {{\tiny $w_{43}$}} (n4)
 (n4) edge[->, thick, bend left=30,]  node  [below=0.1pt] {{\tiny $w_{34}$}} (n3); 
\end{tikzpicture}  \qquad & \qquad 
\begin{tikzpicture}
 [scale=.25,auto=left, node distance=1.5cm]
 \node[style={circle,draw}] (1) at (4,0) {\small{1}};
  \node[style={circle,draw}] (23) at (14,0) {c};
  \node[style={circle,draw}] (4) at (24,0) {\small{4}};
 \path
(1) edge [loop above] node {{\tiny $w_{11}$}} (1)
(23) edge [loop above] node {{\tiny $w_{22} + w_{33}$}} (23)
 (1) edge[->, thick, bend left=30]  node  [above=0.1pt] {{\tiny $w_{21}$}} (23)
 (23) edge[->, thick, bend left=30,]  node  [below=0.1pt] {{\tiny $w_{12}$}} (1)
(4) edge [loop above] node {{\tiny $w_{44}$}} (4)
 (23) edge[->, thick, bend left=30]  node  [above=0.1pt] {{\tiny $w_{43}$}} (4)
 (4) edge[->, thick, bend left=30,]  node  [below=0.1pt] {{\tiny $w_{34}$}} (23); 
 \end{tikzpicture} 
\end{tabular}
\end{center}
\caption{Two 2-cell weighted network motifs $N_1$ and $N_2$ (left and center) and the coalescence network $N = N_1 \circ N_2$ (right), obtained by merging cells $2$ and $3$.} \label{Fig:coal_2-cell}
\end{figure}
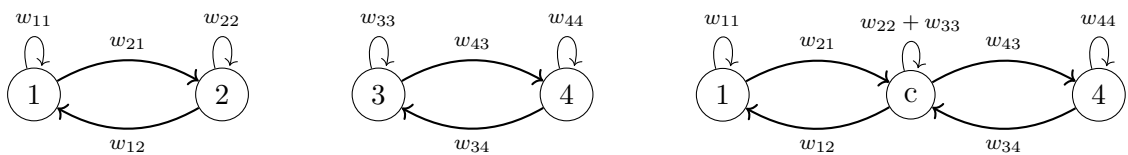

The weighted adjacency matrices of these three networks are:
$$
W_{N_1} =
\left[
\begin{array}{cc}
w_{11} & w_{12} \\
w_{21} & w_{22} \\
\end{array}
\right], \quad
W_{N_2} =
\left[
\begin{array}{cc}
w_{33} & w_{34} \\
w_{43} & w_{44} \\
\end{array}
\right], \quad \mbox{and} \quad
W_{N} =
\left[
\begin{array}{ccc}
w_{11} & w_{12} & 0 \\
w_{21} & w_{22} + w_{33}  & w_{34} \\
0 & w_{43} & w_{44} \\
\end{array}
\right]\, .
$$
\end{exam}

In general, up to numbering of the cells, the weighted adjacency matrix, the diagonal valency matrix and the weighted Laplacian matrix of the coalescence network have the following block structure 
\begin{equation} \label{Eq:block_form}
\begin{bmatrix}
   \underline{\overline{*}}_{N_1} & {*_{N_1}}^c         & 0\\
    {*_{N_1}}_c   & {*_{N_1}}^c_c +{*_{N_2}}^c_c & {*_{N_2}}_c\\
    0     & {*_{N_2}}^c & \underline{\overline{*}}_{N_2}\\
  \end{bmatrix},
\end{equation}
where $*$ is replaced by $W$, $D$ or $L$ for the weighted adjacency matrix, the diagonal valency matrix or the weighted Laplacian matrix, respectively.
Moreover,
$\underline{\overline{*}}_{N_1}$ is the $(n_1-1)\times (n_1-1)$ matrix, ${*_{N_1}}^c$ is the $(n_1-1)$-column matrix, ${*_{N_1}}_c$ is the $(n_1-1)$-row matrix, ${*_{N_1}}^c_c,{*_{N_2}}^c_c\in\RR$, $\underline{\overline{*}}_{N_2}$ is the $(n_2-1)\times (n_2-1)$ matrix, ${*_{N_2}}^c$ is the $(n_2-1)$-column matrix, ${*_{N_2}}_c$ is the $(n_2-1)$-row matrix such that the  corresponding  matrix of $N_1$ and $N_2$ have, respectively, the form
$$
\begin{bmatrix}
    \underline{\overline{*}}_{N_1} & {*_{N_1}}^c\\
    {*_{N_1}}_c & {*_{N_1}}_c^c 
  \end{bmatrix},
	\quad \quad 
	\begin{bmatrix}
    {*_{N_2}}_{c}^c & {*_{N_2}}_c\\
    {*_{N_2}}^c & \underline{\overline{*}}_{N_2} 
  \end{bmatrix}.
$$

\begin{rem}
Note that a coalescence of two regular networks will not be regular. 
If $N_1$ is regular of valency $v_1$ and $N_2$ is regular of valency $v_2$, then the merging cell has valency $v_1+v_2$ while the other cells have valency $v_1$ or $v_2$.
Nevertheless, $N_1 \circ N_2$ can be regular without $N_1$ and/or $N_2$ being regular. 
For example, if $N_1$ is regular of valency $v$ and all the cells in $N_2$ have valency $v$, with the exception of a cell $c$ that has no input, then the coalescence merging any cell of $N_1$ and the cell $c$ of $N_2$ results in a regular network.
\end{rem}

Later, we will study networks with a feedforward structure defined as follows.

\begin{Def}
We say that a network has a {\it feedforward structure} if its set of cells can be partioned into an ordered sequence of subsets (layers) such that the cells in each subset can only receive edges from cells of its own subset or of the previous one.
A coalescence network $N = N_1 \circ N_2$ where the coalescence cell has no input edge in $N_2$ has a feedforward structure and will be designated by a {\it Feedforward Coalescence Network (FFCN)}.
\end{Def}

To each coupled cell network we can associate a class of dynamical systems. 
Here, we consider differential equations and assume that they have an {additive input structure}, which naturally accommodates the weighted setup, see ~\cite{AD18}. 
Since we are considering only one cell type, the internal cell dynamics is the same for all cells and, since we are assuming only one edge type, the coupling function is the same for all couplings. 
As we are going to consider codimension-one bifurcation problems, we include a bifurcation parameter $\lambda \in \RR$.
An \textit{input additive coupled cell system admissible} by the network $N$ is, then, defined by 
\begin{equation} \label{eq:systemglobal}
\dot X = F(X, \lambda),
\end{equation}
 with $F=(f_1, \ldots, f_n)$, $X=(x_1,\ldots, x_n) \in \left(\RR^k\right)^n$ and
\begin{equation} \label{eq:parEDOsystem}
\dot{x}_i  =  f_i(x_1,\ldots, x_n,\lambda) = g(x_i, \lambda) +  \sum_{j} w_{ij} h(x_i,x_j, \lambda),
\end{equation}
where the smooth functions $g:\RR^k \times \RR \rightarrow \RR^k$ and $h:\RR^k \times \RR^k \times \RR \rightarrow \RR^k$ define the internal and the coupling dynamics of the cells, respectively.

Here, we will consider that the individual cell dynamics is one-dimensional, that is, $k=1$.

\begin{rem}\label{rem:admcolsys} 
Given smooth functions $g:\RR \times \RR \rightarrow \RR$ and $h:\RR \times \RR \times \RR \rightarrow \RR$, denote by $F^N:\RR^{n_1+n_2}\times\RR\rightarrow \RR^{n_1+n_2}$, $F^{N_1}:\RR^{n_1}\times\RR\rightarrow \RR^{n_1}$ and $F^{N_2}:\RR^{n_2}\times\RR\rightarrow \RR^{n_2}$ the admissible systems for $N$, $N_1$ and $N_2$, respectively. 
An admissible system for the coalescence network $N=N_1\circ N_2$ by merging cell $c$ can be written in terms of the correspondent admissible systems for the networks $N_1$ and $N_2$ as follows, for $i=1,2$,
$$(F^N)_a(x_1,x_c,x_2,\lambda)= (F^{N_i})_a(x_i,x_c,\lambda),\quad a\in N_i\setminus \{c\}$$
$$(F^N)_c(x_1,x_c,x_2,\lambda)= (F^{N_1})_c(x_1,x_c,\lambda)+(F^{N_2})_c(x_2,x_c,\lambda)-g(x_c,\lambda),$$
where $x_i\in\RR^{n_i-1}$ corresponds to the coordinates of the cells in $N_i\setminus \{c\}$, $x_c\in\RR$ corresponds to the {\it coalescence coordinate} -- the coordinate of the coalescence cell $c$ -- and $\lambda\in\RR$.
\end{rem}

\begin{exam}
Considering, again, the networks in Figure~\ref{Fig:coal_2-cell}, the coupled cell systems with additive input structure consistent with $N_1$ and $N_2$ and the coalescence network $N$,  have the form:
$$
\left\{ 
\begin{array}{rcl}
\dot{x}_1 & = & g(x_1, \lambda) +  w_{11} h(x_1,x_1, \lambda)  +  w_{12} h(x_1,x_2, \lambda) \\
\dot{x}_2 & = & g(x_2, \lambda) + w_{21} h(x_2,x_1, \lambda) +  w_{22} h(x_2,x_2, \lambda)\\
\end{array} \, ,
\right.
$$
$$
\left\{ 
\begin{array}{rcl}
\dot{x}_3 & = & g(x_3, \lambda) +  w_{33} h(x_3,x_3, \lambda)  +  w_{34} h(x_3,x_4, \lambda) \\
\dot{x}_4 & = & g(x_4, \lambda) + w_{43} h(x_4,x_3, \lambda) +  w_{44} h(x_4,x_4, \lambda)\\
\end{array} \, ,
\right.
$$
and
$$
\left\{ 
\begin{array}{rcl}
\dot{x}_1 & = & g(x_1, \lambda) +  w_{11} h(x_1,x_1, \lambda)  +  w_{12} h(x_1,x_{23}, \lambda) \\
\dot{x}_{23} & = & g(x_{23}, \lambda) + w_{21} h(x_{23},x_1, \lambda) +  \left(w_{22} + w_{33} \right) h(x_{23},x_{23}, \lambda) + w_{34} h(x_{23},x_4, \lambda)\\
\dot{x}_4 & = & g(x_4, \lambda) + w_{43} h(x_4,x_{23}, \lambda) +  w_{44} h(x_4,x_4, \lambda)\\
\end{array} \, ,
\right.
$$
respectively, where $g$ and $h$ are smooth functions and $\lambda \in \RR$ is the bifurcation parameter.
\end{exam}

We assume that system (\ref{eq:systemglobal}) has a full synchronous equilibrium when $\lambda=0$, which we can consider is at the origin.
The next proposition follows from Proposition 3.1 in \cite{AD18}, which states that the linearization of system (\ref{eq:systemglobal}) at the origin is determined by the adjacency matrix of the associated network.

\begin{prop} \label{prop:Jac}
Let $N$ be an $n$-cell weighted coupled cell network with weighted adjacency matrix $W_N$ and weighted Laplacian $L_N$.
Consider an $N$-admissible system where  the equation of cell  $i$  is given by (\ref{eq:parEDOsystem}). Take 
$$
 g_0 = \left(d_{x} g \right)_{0}, \quad h_1 =  \left(d_{x_1} h \right)_{0},  \quad h_2 =  \left(d_{x_2} h \right)_{0},
 $$ 
 the first derivatives of $g(x,\lambda)$ and $h(x_1,x_2,\lambda)$ at the origin $x=x_1=x_2=\lambda=0$. Then  
$$
J^N \equiv \left(dF\right)_{0} = g_0  {\mathrm id}_n +  {h_1} D_N + {h_2} W_N
$$
where $ {\mathrm id}_n$ is the $n \times n$ identity matrix.

\vspace{0.05in}

Moreover,

\noindent (i) If $N$ is regular with valency $v$, the Jacobian at the origin simplifies to
$$
J^N \equiv \left(dF\right)_{0} = (g_0 + v {h_1}) {\mathrm id}_n + {h_2} W_N\, .
$$
Denoting by $\mu_1, \ldots, \mu_n$ the eigenvalues of $W_N$,  the eigenvalues of $J$ are $g_0 + v {h_1} + \mu_j {h_2}$, for $j=1, \ldots, n$, including algebraic multiplicity. 

\vspace{0.05in}

\noindent (ii) If ${h_2} = - {h_1}$, the Jacobian at the origin simplifies to
$$
J^N \equiv \left(dF\right)_{0} = g_0 {\mathrm id}_n +  {h_1} L_N\, . 
$$
Denoting by $\mu_1, \ldots, \mu_n$ the eigenvalues of $L_N$, the eigenvalues of $J$ are given by  $g_0  + \mu_i {h_1}$, for $i=1, \ldots, n$, including algebraic multiplicity. 
\end{prop}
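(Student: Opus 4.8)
The plan is to compute the Jacobian $J^N=(dF)_0$ directly from the admissible form (\ref{eq:parEDOsystem}) entry by entry, read off the claimed matrix identity, and then derive (i) and (ii) by an elementary spectral argument. First I would fix a pair of indices and differentiate $f_i=g(x_i,\lambda)+\sum_k w_{ik}h(x_i,x_k,\lambda)$. For an off-diagonal entry $j\neq i$, the variable $x_j$ enters only through the second slot of the single summand with $k=j$, so $\partial f_i/\partial x_j=w_{ij}\,(d_{x_2}h)(x_i,x_j,\lambda)$, which at the origin equals $w_{ij}h_2$. Hence the off-diagonal part of $J^N$ is exactly $h_2 W_N$.

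The delicate step is the diagonal entry. Here $x_i$ appears through $g(x_i,\lambda)$, through the first slot of every coupling summand $h(x_i,x_k,\lambda)$, and additionally through the second slot of the self-coupling summand $w_{ii}h(x_i,x_i,\lambda)$. Applying the chain rule and evaluating at the origin gives
$$\left.\frac{\partial f_i}{\partial x_i}\right|_0 = g_0 + h_1\sum_k w_{ik} + h_2 w_{ii} = g_0 + h_1 v(i) + h_2 w_{ii},$$
where $v(i)$ is the input valency of cell $i$. The point to be careful about is that the self-loop weight $w_{ii}$ splits into an $h_1$-contribution, which merges into the valency sum and hence into $D_N$, and an $h_2$-contribution, which stays with $W_N$. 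Matching the diagonal entries $g_0+h_1 v(i)+h_2 w_{ii}$ and the off-diagonal entries $h_2 w_{ij}$ against $g_0\,\mathrm{id}_n+h_1 D_N+h_2 W_N$ then yields the formula; equivalently, this is Proposition~3.1 of \cite{AD18} specialized to $k=1$.

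Finally, (i) and (ii) follow because adding a scalar multiple of the identity only shifts the spectrum. For (i), regularity gives $D_N=v\,\mathrm{id}_n$, so $h_1 D_N$ is absorbed into $g_0\,\mathrm{id}_n$ and $J^N=(g_0+v h_1)\mathrm{id}_n+h_2 W_N$; since $J^N$ is a scalar shift of $h_2 W_N$, it shares the eigenvectors of $W_N$ and each eigenvalue $\mu_j$ of $W_N$ produces the eigenvalue $g_0+v h_1+h_2\mu_j$, with algebraic multiplicities unchanged. For (ii), the hypothesis $h_2=-h_1$ collapses $h_1 D_N+h_2 W_N$ into $h_1(D_N-W_N)=h_1 L_N$ by the definition of the Laplacian, so $J^N=g_0\,\mathrm{id}_n+h_1 L_N$ and the same scalar-shift argument applied to $L_N$ gives the eigenvalues $g_0+h_1\mu_i$. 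The only real obstacle is the correct handling of the diagonal and the self-loop contribution; the remainder is bookkeeping together with the one-line observation about scalar shifts of a matrix.
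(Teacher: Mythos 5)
Your proof is correct. It is worth noting that the paper itself contains no computation here: it disposes of the matrix identity $J^N=g_0\,\mathrm{id}_n+h_1D_N+h_2W_N$ by citing Proposition~3.1 of \cite{AD18}, and treats (i) and (ii) as immediate consequences of regularity ($D_N=v\,\mathrm{id}_n$) and of the definition $L_N=D_N-W_N$. What you have done is reconstruct, correctly, the entry-by-entry differentiation that the citation stands in for. Your off-diagonal computation $\partial f_i/\partial x_j\vert_0=w_{ij}h_2$ is right, and you correctly identify the one delicate point: the diagonal entry $g_0+h_1v(i)+h_2w_{ii}$, where the self-loop weight $w_{ii}$ contributes once through $h_1$ (absorbed into the valency $v(i)=\sum_k w_{ik}$, hence into $D_N$) and once through $h_2$ (staying in $W_N$); these entries match those of $g_0\,\mathrm{id}_n+h_1D_N+h_2W_N$ exactly. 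Your scalar-shift argument for the spectra is also sound: for $b\neq 0$ the characteristic polynomial of $a\,\mathrm{id}_n+bM$ equals $\prod_j\bigl(t-a-b\mu_j\bigr)$ where the $\mu_j$ are the eigenvalues of $M$ with algebraic multiplicity, and the case $b=0$ is trivial; applying this with $M=W_N$ gives (i) and with $M=L_N$ gives (ii). So in substance the two routes coincide — yours simply makes the argument self-contained rather than delegated to the reference, which is a reasonable thing to do in a blind setting.
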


A bifurcation condition associated with $N$ corresponds to a vanishing eigenvalue of $J^N$.
Moreover, we assume generically on the functions $g$ and $h$ that define the admissible systems that exactly one eigenvalue satisfies the bifurcation condition and the others do not vanish since we consider codimension-one bifurcations.
We say that $b:D\rightarrow \RR^{n}$ is an {\it equilibrium branch} of $F^N$ if $F^N(b(\lambda),\lambda)=0$, $b(0)=(0,\dots,0)$ and $D$ has the form $]-\epsilon,\epsilon[$, $[0,\epsilon[$ or $]-\epsilon,0]$ for some $\epsilon>0$. 
We say that $b$ is a { \it bifurcation branch} if $b\not\equiv 0$. 
In this work, we assume that the bifurcations in the component networks are well-known and intend to deduce the bifurcations in the coalescence network. 
For this purpose, the first question that arises is if there are common eigenvalues associated with the component networks and the coalescence network.

\begin{exam}
Returning to the networks in Figure~\ref{Fig:coal_2-cell} and fixing every weight to be equal to $1$, the Jacobian matrices at the origin of an input additive coupled cell system admissible for the networks $N_1$ and $N_2$ and the coalescence network $N$ are given by:
$$
J_{N_1} =J_{N_2} =
\left[
\begin{array}{cc}
g_0  +  2 {h_1}  + {h_2} & {h_2} \\
{h_2} & g_0  +  2 {h_1}  + {h_2} \\
\end{array}
\right]$$
and
$$J_{N} =
\left[
\begin{array}{ccc}
g_0  +  2 {h_1}  + {h_2} & {h_2} & 0 \\
{h_2} & g_0  +  4 {h_1}  + 2{h_2}  & {h_2} \\
0 & {h_2} & g_0  +  2 {h_1}  + {h_2} \\
\end{array}
\right]\,,
$$
with $g_0, {h_1}$ and ${h_2}$ given in Proposition~\ref{prop:Jac}.   
The eigenvalues of $J_{N_1}$ and $J_{N_2}$ are $g_0+2{h_1}$ and $g_0+2{h_1}+ 2{h_2}$. 
The eigenvalues of $J_N$ are $g_0+2{h_1}+{h_2}$, $ (2 g_0 + 6 h_1 + 3 h_2 + \sqrt{4 h_1^2 + 4 h_1 h_2 + 9 h_2^2})/2$ and $ (2 g_0 + 6 h_1 + 3 h_2 - \sqrt{4 h_1^2 + 4 h_1 h_2 + 9 h_2^2})/2$.
\end{exam}

As the previous example shows, the eigenvalues of $J_{N_1}$ and $J_{N_2}$ do not need to be eigenvalues of the coalescence Jacobian matrix $J_{N_1\circ N_2}$.
Thus, in general, a bifurcation condition in an $N_1\circ N_2$-admissible system does not need to follow from a bifurcation condition in the corresponding systems admissible by
the component networks $N_1$ or $N_2$.

\begin{rem}
Relatively to the block structures in (\ref{Eq:block_form}), the block structure of the Jacobian matrix of a coalescence network has a subtle difference in the entry of the merging cell.
The Jacobian of an $N_1\circ N_2$-admissible system has the following block structure form
\begin{equation} \label{Eq:block_formJ}
\begin{bmatrix}
   \underline{\overline{J}}_{N_1} & {J_{N_1}}^c         & 0\\
    {J_{N_1}}_c   & {J_{N_1}}^c_c +{J_{N_2}}^c_c -g_0 & {J_{N_2}}_c\\
    0     & {J_{N_2}}^c & \underline{\overline{J}}_{N_2}\\
  \end{bmatrix},
\end{equation}
where $\underline{\overline{J}}_{N_1}$ is the $(n_1-1)\times (n_1-1)$ matrix, ${J_{N_1}}^c$ is the $(n_1-1)$-column matrix, ${J_{N_1}}_c$ is the $(n_1-1)$-row matrix, ${J_{N_1}}^c_c,{J_{N_2}}^c_c\in\RR$, $\underline{\overline{J}}_{N_2}$ is the $(n_2-1)\times (n_2-1)$ matrix, ${J_{N_2}}^c$ is the $(n_2-1)$-column matrix, ${J_{N_2}}_c$ is the $(n_2-1)$-row matrix such that the Jacobians of the systems admissible by $N_1$ and $N_2$ have, respectively, the form
\[
\begin{bmatrix}
    \underline{\overline{J}}_{N_1} & {J_{N_1}}^c\\
    {J_{N_1}}_c & {J_{N_1}}_c^c 
  \end{bmatrix},
	\quad \quad 
	\begin{bmatrix}
    {J_{N_2}}_{c}^c & {J_{N_2}}_c\\
    {J_{N_2}}^c & \underline{\overline{J}}_{N_2} 
  \end{bmatrix}. \qedhere
\]
\end{rem} 

Despite, in general, there is no correspondence between the eigenvalues of the Jacobian matrix associated with the coalescence network and those of the Jacobian matrices associated with the component networks, there are some cases in which we can relate them.
The next result shows one of these cases.

\begin{prop}\label{prop:eigevaluenoninter}
Let $N=N_1 \circ N_2$ be a coalescence network obtained by merging cell $c$. We have, \\
\noindent (i) If $v = (v_1,v_c) \in \RR^{n_1-1} \times \RR$ is an eigenvector of  $J_{N_1}$  associated with the eigenvalue $\mu$ and such that $v_c=0$ then $w=(v_1,0,0) \in \RR^{n_1-1} \times \RR \times \RR^{n_2-1}$ is an eigenvector of  $J_{N}$ associated with the eigenvalue $\mu$. \\
\noindent (ii) If $v = (v_c,v_2) \in \RR \times \RR^{n_2-1}$ is an eigenvector of  $J_{N_2}$  associated with the eigenvalue $\mu$ and such that $v_c=0$ then $w=(0,0,v_2) \in \RR^{n_1-1} \times \RR \times \RR^{n_2-1}$ is an eigenvector of  $J_{N}$ associated with the eigenvalue $\mu$. 
\end{prop}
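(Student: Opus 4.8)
The plan is to argue by a direct block computation using the explicit block decompositions of $J_N$, $J_{N_1}$ and $J_{N_2}$ recorded in (\ref{Eq:block_formJ}). The whole point of the statement is that the hypothesis $v_c=0$ forces the component eigenvector to be supported away from the coalescence coordinate, so that extending it by zeros into the cells of the other component produces an eigenvector of $J_N$ without ever touching the single entry of $J_N$ that genuinely differs from the component Jacobians, namely the coalescence diagonal entry ${J_{N_1}}^c_c+{J_{N_2}}^c_c-g_0$.

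For part (i), I would first unpack the hypothesis. Writing $v=(v_1,0)$ and multiplying out $J_{N_1}v=\mu v$ block row by block row against the form $\left[\begin{smallmatrix}\underline{\overline{J}}_{N_1} & {J_{N_1}}^c\\ {J_{N_1}}_c & {J_{N_1}}_c^c\end{smallmatrix}\right]$ gives the two relations $\underline{\overline{J}}_{N_1}\,v_1=\mu v_1$ and ${J_{N_1}}_c\,v_1=0$. Next I would compute $J_N w$ for $w=(v_1,0,0)$ using the $3\times 3$ block form of $J_N$. The first block row returns $\underline{\overline{J}}_{N_1}v_1=\mu v_1$; the middle (coalescence) block row returns ${J_{N_1}}_c v_1=0$, which equals $\mu$ times the vanishing middle coordinate of $w$ (here the modified diagonal entry multiplies $0$ and so drops out); and the last block row returns $0$, since its corner block is $0$ and its remaining blocks multiply the vanishing coordinates of $w$. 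Hence $J_N w=\mu w$.

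Part (ii) is the mirror image. With $v=(0,v_2)$, the relation $J_{N_2}v=\mu v$ against $\left[\begin{smallmatrix}{J_{N_2}}_c^c & {J_{N_2}}_c\\ {J_{N_2}}^c & \underline{\overline{J}}_{N_2}\end{smallmatrix}\right]$ yields ${J_{N_2}}_c v_2=0$ and $\underline{\overline{J}}_{N_2}v_2=\mu v_2$, and computing $J_N(0,0,v_2)$ block row by block row reproduces $\mu(0,0,v_2)$ for exactly the same reasons, using the vanishing corner block in the top row and the $v_c=0$ information in the coalescence row.

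I expect no serious obstacle here; the argument is purely linear-algebraic bookkeeping. The only point that requires care, and the reason the statement is not entirely trivial, is the modified coalescence diagonal entry: had $v_c$ been nonzero, the middle block row of $J_N w$ would have picked up the term $({J_{N_1}}^c_c+{J_{N_2}}^c_c-g_0)v_c$ rather than ${J_{N_1}}_c^c v_c$ (resp.\ ${J_{N_2}}_c^c v_c$), together with the coupling ${J_{N_2}}_c$ (resp.\ ${J_{N_1}}^c$) into the other component, and the zero-extension would generically fail to be an eigenvector. Thus the hypothesis $v_c=0$ is precisely what isolates the two components inside $J_N$ and makes the extension by zeros work.
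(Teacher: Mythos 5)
Your proof is correct and takes essentially the same approach as the paper's: both unpack the hypothesis $v_c=0$ into the relations $\underline{\overline{J}}_{N_1}v_1=\mu v_1$ and ${J_{N_1}}_c v_1=0$ (resp.\ their mirror images), and then verify $J_N w=\mu w$ by direct block multiplication against the structure in (\ref{Eq:block_formJ}). The only differences are cosmetic: you write out part (ii) explicitly and comment on why $v_c=0$ is essential, whereas the paper treats (ii) as analogous and leaves that observation implicit.
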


\begin{proof}
We will give the proof for $(i)$. The proof for $(ii)$ is analogous.
Consider the block structure form of $J_{N}$ in (\ref{Eq:block_formJ}) and let $v= (v_1,v_c)$ be an eigenvector of $J_{N_1}$ associated with the eigenvalue $\mu$, where $v_1$ is the vector in $\RR^{n_1-1}$ with the first $n-1$ coordinates of $v$, that is, without the $c$-coordinate.
If $v_c=0$ then $v=(v_1,0)$, $\underline{\overline{J}}_{N_1} v_1=\mu v_1$ and ${J_{N_1}}_c  v_1=0$.
Taking the vector $w=(v_1,0,0)$, we have that 
$$
\begin{bmatrix}
   \underline{\overline{J}}_{N_1} & {J_{N_1}}^c         & 0\\
    {J_{N_1}}_c   & {J_{N_1}}^c_c +{J_{N_2}}^c_c-g_0 & {J_{N_2}}_c\\
    0     & {J_{N_2}}^c & \underline{\overline{J}}_{N_2}
			\end{bmatrix}\begin{bmatrix}
   v_1\\
    0\\
    0\\
  \end{bmatrix}=
	\begin{bmatrix}
    \underline{\overline{J}}_{N_1}v_1 \\
    {J_{N_1}}_c v_1 \\
    0 \\
  \end{bmatrix}=	\begin{bmatrix}
    \mu v_1\\
    0\\
    0\\
  \end{bmatrix}= \mu w.
$$
 
Therefore $w$ is an eigenvector of $L_{N_1\circ N_2}$ associated with the eigenvalue $\mu$.
\end{proof}


As mentioned,
in general, an eigenvalue of $J_{N_1}$ or $J_{N_2}$ do not need to be an eigenvalue of the coalescence Jacobian matrix $J_{N_1\circ N_2}$.
Thus, a bifurcation in an $N_1\circ N_2$-admissible system does not need to occur for the corresponding $N_1$ or $N_2$-admissible system.
Nevertheless, lets us suppose that there exists a bifurcation in an $N_1\circ N_2$-admissible system at $\lambda=0$ that occurs simultaneously for the corresponding $N_1$ and $N_2$-admissible systems. 
Can we deduce all the steady-state bifurcation branches for the network $N_1\circ N_2$ from the bifurcation branches for $N_1$ and $N_2$? 
Is it true that `the coalescence' of a bifurcation branch for $N_1$ with a bifurcation branch for $N_2$ is a bifurcation branch for $N_1\circ N_2$? 
Do we get all bifurcation branches for $N_1\circ N_2$ by`the coalescence' of the bifurcation branches for $N_1$ and $N_2$?
In general, the answers to these questions are negative as we detail in the following.

Given $(b_1,b^1_c)$ and $(b^2_c,b_2)$ equilibrium branches of $F^{N_1}$ and $F^{N_2}$, `the coalescence' of these branches is well defined if $b^1_c\equiv b^2_c$.
However, even under the condition $b^1_c\equiv b^2_c$, `the coalescence' of these branches $(b_1,b^1_c,b_2)$ does not need to be an equilibrium branch of $F^N$, because
$$(F^N)_d(b_1(\lambda),b_c(\lambda),b_2(\lambda),\lambda)= (F^{N_1})_d(b_1(\lambda),b_c(\lambda),\lambda)+(F^{N_2})_c(b_2(\lambda),b_c(\lambda),\lambda)-g(b_c(\lambda),\lambda)$$
$$=-g(b_c(\lambda),\lambda)$$
does not need to vanish.

If $g(b_c(\lambda),\lambda)=0$, then $b_c(\lambda)$ is an equilibrium branch of the network with only one cell and without couplings.
This network has only the eigenvalue $g_0$.
Thus the bifurcation condition in $N_1\circ N_2$, $N_2$ and $N_1$ is given by $g_0=0$ or we can assume without loss of generality that $b_c(\lambda)=0$. 
If $g_0=0$, then ${h_1} D_N v=-{h_2} W_N v$ for $v\in \ker(J^N)$ which can be a degenerated condition.
When $g_0\neq 0$, we can assume without of loss of generality that $g(x,\lambda)=0\iff x=0$ in a neighbourhood of the origin, and `the coalescence' of bifurcation branches is a bifurcation branch if and only if the component bifurcation branches vanish at the merging cell.

In the remaining of this work, we restrict to the class of admissible systems such that the condition ${h_1}=-{h_2}$ in the second part of Proposition~\ref{prop:Jac} holds and the full-synchrony subspace $\Delta_0=\{x\in\RR: x_i=x_j, \ \forall_{i,j}\}$ is invariant for every system.

\begin{rem}\label{rem:fullsysinv}
The full-synchrony subspace is invariant by an (input-additive) admissible system for a network $N$ if and only if $h(z,z,\lambda)=0$ or $N$ is regular. In fact, 
note that we have
$f_c(z,\dots,z,\lambda)=f_d(z,\dots,z,\lambda)\iff h(z,z,\lambda)(v(c)-v(d))=0$. 
The condition that the network is regular appears in \cite[Remark 2.7]{ADF17}. 
However, recall that, as mentioned earlier, in the coalescence case the networks $N_1\circ N_2$, $N_1$ and $N_2$ cannot all be regular.
Thus, here we have to consider the condition $h(z,z,\lambda)=0$ on the coupling term which is usually called {\it diffusive}.
This condition appears, for example, in \cite{NSS19,PERV14,PPP19} where the coupling function depends on the difference between the own state of the node and the state of the input node.
Moreover, it implies that the condition ${h_1} = - {h_2}$ in Proposition~\ref{prop:Jac} holds.
\end{rem}

We thus focus on the steady-state bifurcation branches for coalescence admissible systems that are diffusive.
It follows from Proposition~\ref{prop:Jac} that a bifurcation condition is given by $g_0 + \mu {h_1}=0$, where $\mu$ is an eigenvalue of $L_N$. 
In this case, we say that such {\it bifurcation condition is associated with the eigenvalue} $\mu$ of $L_N$.
Moreover, we assume generically on the functions $g$ and $h$ that define the admissible systems that exactly one eigenvalue satisfies the bifurcation condition and that $g_0 + \tilde{\mu} {h_2}\neq 0$ for the other eigenvalues $\tilde{\mu}$ of $L_N$.
We also assume that the origin is an equilibrium for any $\lambda$.

\begin{rem}
When the bifurcation condition is not given by the zero eigenvalue of $L_N$
, that is, when $g_0\neq 0$, we can assume without loss of generality that the origin is an equilibrium for any small $\lambda$, after a change of coordinates. 
This is because, by Remark~\ref{rem:fullsysinv}, the full-synchrony subspace is invariant for diffusive admissible systems and the unique eigenvalue of the Laplacian matrix restricted to the full-synchrony subspace is zero.
\end{rem}

\section{Feedforward coalescence networks (FFCNs)}


In this section, we assume that $N = N_1 \circ N_2$ is a feedforward coalescence network (FFCN). 
We first prove that, for FFCNs, the eigenvalues of the Laplacian matrix $L_{N}$ are given by the union of the eigenvalues of the Laplacian matrices $L_{N_1}$ and $L_{N_2}$.
Then we study the bifurcation coalescence problem for feedforward diffusive coalescence admissible systems.

\subsection{Eigenvalues and eigenvectors structure for FFCNs}
In what follows, given a matrix $A$, we denote by $\overline{A}$, the submatrix of $A$ obtained by removing the first row.


The first row of the Laplacian matrix $L_{N_2}$ has only zero entries (regardless of whether the coalescence cell has self inputs or not) and, thus, the Laplacian matrix for $N = N_1 \circ N_2$ has the following block structure 
\begin{equation} \label{eq:block_L}
L_N=
\left[
\begin{array}{cc}
L_{N_1} & | 0_{n_1 \times (n_2-1)} \\
\hline
0_{(n_2-1) \times (n_1-1)} | & \overline{L}_{N_2}
\end{array}
\right]\, .
\end{equation}

We have, then, the following result.

\begin{prop} \label{prop:eig_val}
The eigenvalues of $L_N$ are the union of the eigenvalues of $L_{N_1}$ and $L_{N_2}$. Moreover, for $\mu$ an eigenvalue of $L_N$,  \\
\noindent (i) if $\mu =0$, we have $m_a(0) = m^1_a(0) + m^2_a(0) -1$, \\
\noindent (ii) if $\mu \ne 0$, we have $m_a(\mu) = m^1_a(\mu) + m^2_a(\mu)$, \\
where $m_a(\mu)$ and $m^i_a(\mu)$, $i=1,2$, denote the algebraic multiplicity of $\mu$ for $L_N$ and $L_{N_i}$, respectively.
\end{prop}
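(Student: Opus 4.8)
The plan is to prove the statement at the level of characteristic polynomials and then read off the algebraic multiplicities as orders of vanishing. Write $p_A(\mu)=\det(\mu\,\mathrm{id}-A)$ for the characteristic polynomial of a square matrix $A$. The whole argument rests on two block–triangular determinant factorizations, one for $L_N$ and one for $L_{N_2}$, both of which follow from the FFCN hypothesis that the coalescence cell has no input edge in $N_2$.

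First I would record the structure already displayed in (\ref{eq:block_L}). Partitioning the cells of $N=N_1\circ N_2$ into those of $N_1$ (the first $n_1$ cells, including $c$) and those of $N_2\setminus\{c\}$ (the last $n_2-1$ cells), the vanishing of the top-right block in (\ref{eq:block_L}) shows that $L_N$ is block lower-triangular,
\[
L_N=\begin{bmatrix} L_{N_1} & 0 \\ B & \underline{\overline{L}}_{N_2}\end{bmatrix},
\]
where $\underline{\overline{L}}_{N_2}$ is the trailing $(n_2-1)\times(n_2-1)$ principal submatrix of $L_{N_2}$ (remove its first row and column) and $B$ collects the couplings from $c$ into $N_2\setminus\{c\}$. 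The standard block-triangular determinant formula then gives $p_{L_N}(\mu)=p_{L_{N_1}}(\mu)\,p_{\underline{\overline{L}}_{N_2}}(\mu)$, the off-diagonal block $B$ playing no role.

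Next I would exploit that, by the FFCN hypothesis, the first row of $L_{N_2}$ (the row of the merging cell $c$, placed first in $N_2$) is identically zero. Hence $L_{N_2}$ is itself block lower-triangular with diagonal blocks the scalar $0$ and $\underline{\overline{L}}_{N_2}$, so $p_{L_{N_2}}(\mu)=\mu\,p_{\underline{\overline{L}}_{N_2}}(\mu)$. Combining the two factorizations yields the single polynomial identity
\[
\mu\,p_{L_N}(\mu)=p_{L_{N_1}}(\mu)\,p_{L_{N_2}}(\mu),
\]
equivalently $p_{L_N}=p_{L_{N_1}}\,p_{L_{N_2}}/\mu$, which is a genuine polynomial because $0$ is always an eigenvalue of a Laplacian (its rows sum to zero), so $\mu$ divides $p_{L_{N_2}}$. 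This already shows that the eigenvalue set of $L_N$ is the union of those of $L_{N_1}$ and $L_{N_2}$.

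Finally I would extract the multiplicities by comparing orders of vanishing at a fixed eigenvalue $\mu_0$ on both sides of $\mu\,p_{L_N}(\mu)=p_{L_{N_1}}(\mu)\,p_{L_{N_2}}(\mu)$. For $\mu_0\neq 0$ the extra factor $\mu$ on the left contributes nothing, so $m_a(\mu_0)=m^1_a(\mu_0)+m^2_a(\mu_0)$, which is (ii); for $\mu_0=0$ the factor $\mu$ raises the left-hand order by one, giving $1+m_a(0)=m^1_a(0)+m^2_a(0)$, which is (i). The only point requiring care — the main (minor) obstacle — is the bookkeeping at $\mu=0$: one must check that the single lost zero eigenvalue is exactly the one discarded when passing from $L_{N_2}$ to its submatrix $\underline{\overline{L}}_{N_2}$, which is precisely what the factor $\mu$ in $p_{L_{N_2}}=\mu\,p_{\underline{\overline{L}}_{N_2}}$ encodes; everything else reduces to the routine block-triangular determinant computation.
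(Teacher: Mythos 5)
Your proof is correct and takes essentially the same approach as the paper: the paper's one-line proof invokes precisely your two ingredients, namely the block lower-triangular form of $L_N$ (giving $p_{L_N}=p_{L_{N_1}}\,p_{\underline{\overline{L}}_{N_2}}$) and the fact that the first row of $L_{N_2}$ is zero (giving $p_{L_{N_2}}=\mu\,p_{\underline{\overline{L}}_{N_2}}$). You have simply written out explicitly the characteristic-polynomial bookkeeping, including the correct handling of the eigenvalue $0$, that the paper leaves implicit.
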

\begin{proof}
The result follows from the block lower triangular form of $L_N$, and the fact that $\overline{L}_{N_2}$ is the $(n_2-1) \times n_2$ submatrix of $L_{N_2}$ obtained by removing the first row, which is a zero row. 
\end{proof}

\begin{rems} \label{Rems:eigen_0}
We have, \\
\noindent (i) By Proposition 12 in \cite{CA02}, the zero eigenvalue of a Laplacian matrix is always semisimple. Thus, the geometric multiplicity of the eigenvalue zero of $L_N$ is given by $m_g(0) = m^1_g(0) + m^2_g(0) -1$. \\
\noindent (ii) We can always assume that, except for the eigenvector ${\bf 1}_{n_2}$, the other generators of the eigenspace $E_0$ for $L_{N_2}$ have the first coordinate equal to zero.
\hfill $\Diamond$
\end{rems}


\begin{prop} \label{prop:eig_vec}
For each eigenvalue $\mu$ of $L_{N}$, the associated (generalized) eigenvectors are in correspondence with the (generalized) eigenvectors of $L_{N_1}$ and $L_{N_2}$ associated with $\mu$.
A (generalized) eigenvector of $L_{N}$ associated with $\mu$ can be written as $(v, c,w)\in\RR^{n_1-1}\times\RR\times\RR^{n_2-1}$.

\vspace{0.05in}
\noindent For $\mu  = 0$, \\
\noindent (i) $(v,c,w)$ is an eigenvector of $L_{N}$ associated with the eigenvalue $0$ iff 
$(v,c)$ and $(c,w)$ are eigenvectors of $L_{N_1}$ and $L_{N_2}$, respectively, associated with the eigenvalue $0$. 

\noindent (ii) In particular, if $(0,w)$ is an eigenvector of $L_{N_2}$ associated with the eigenvalue $0$ then $(0,0, w)$ is an eigenvector of $L_{N}$ associated with the eigenvalue $0$.

\vspace{0.05in}
\noindent For $\mu  \ne 0$,\\
\noindent (iii) If $(v,c)\neq (0,0)$, then there exists $w$ such that $(v, c,w)$ is a generalized eigenvector of $L_{N}$ associated with $\mu$ iff $(v,c)$ is a (generalized) eigenvector of $L_{N_1}$ associated with $\mu$.

\noindent (iv) If $w\neq 0$, then $(0, 0, w)$ is a (generalized) eigenvector of $L_{N}$ associated with $\mu$ iff $(0,w)$ is a (generalized) eigenvector of $L_{N_2}$ associated with $\mu$.
\end{prop}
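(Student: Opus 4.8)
The plan is to read everything off the block lower-triangular form \eqref{eq:block_L} of $L_N$ together with the fact that the first row of $L_{N_2}$ vanishes. Writing each vector as $(v,c,w)\in\RR^{n_1-1}\times\RR\times\RR^{n_2-1}$ and grouping the first $n_1$ entries, the block form yields the two structural facts on which the whole argument rests. First, the projection $\pi_1(v,c,w)=(v,c)$ onto the $N_1$-coordinates intertwines the two Laplacians, $\pi_1 L_N=L_{N_1}\pi_1$, since the top-right block is zero and hence the first $n_1$ coordinates of $L_N(v,c,w)$ equal $L_{N_1}(v,c)$. Second, the subspace $U=\{(0,0,w)\}$ is $L_N$-invariant, and on it $L_N$ acts as $\tilde{L}_{N_2}$, the matrix obtained from $L_{N_2}$ by deleting its first row and column; by the same vanishing-row property $\{(0,w)\}$ is $L_{N_2}$-invariant with $L_{N_2}$ acting there as $\tilde{L}_{N_2}$ as well. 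In particular the spectrum of $L_{N_2}$ is $\{0\}$ together with that of $\tilde{L}_{N_2}$, so for $\mu\neq 0$ the algebraic multiplicities of $\mu$ in $L_{N_2}$ and in $\tilde{L}_{N_2}$ coincide, i.e. $m^2_a(\mu)=\dim V_\mu(\tilde{L}_{N_2})$, where $V_\mu$ denotes a generalized eigenspace.

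For $\mu=0$ I would argue directly on kernels: $(v,c,w)\in\ker L_N$ iff $L_{N_1}(v,c)=0$ and $\overline{L}_{N_2}(c,w)=0$, and the latter is equivalent to $L_{N_2}(c,w)=0$ because the deleted first row of $L_{N_2}$ is zero. This is exactly (i) (reading ``eigenvector associated with $0$'' as ``element of the kernel''), and (ii) is the special case $v=0$, $c=0$. No generalized eigenvectors need be considered here, since by Remark~\ref{Rems:eigen_0} the zero eigenvalue of a Laplacian is semisimple.

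For $\mu\neq 0$ and part (iii), the forward implication is immediate from the intertwining: iterating $\pi_1 L_N=L_{N_1}\pi_1$ gives $\pi_1(L_N-\mu I)^k=(L_{N_1}-\mu I)^k\pi_1$, so $\pi_1$ carries $V_\mu(L_N)$ into $V_\mu(L_{N_1})$, and when $(v,c)\neq(0,0)$ the image $(v,c)$ is a genuine generalized eigenvector of $L_{N_1}$. The converse, namely the existence of the lift $w$, is the one step that is not a direct substitution, and I would settle it by a dimension count rather than by solving for $w$. The kernel of $\pi_1$ restricted to $V_\mu(L_N)$ is $V_\mu(L_N)\cap U$, which by invariance of $U$ equals $\{(0,0,w):w\in V_\mu(\tilde{L}_{N_2})\}$ and so has dimension $m^2_a(\mu)$. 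Since $\dim V_\mu(L_N)=m^1_a(\mu)+m^2_a(\mu)$ by Proposition~\ref{prop:eig_val}(ii), rank--nullity gives $\dim\pi_1(V_\mu(L_N))=m^1_a(\mu)=\dim V_\mu(L_{N_1})$; combined with the inclusion $\pi_1(V_\mu(L_N))\subseteq V_\mu(L_{N_1})$ this forces equality, so every generalized eigenvector $(v,c)$ of $L_{N_1}$ lifts to some $(v,c,w)\in V_\mu(L_N)$. Part (iv) then falls out of the invariance of $U$: $(0,0,w)\in V_\mu(L_N)$ iff $w\in V_\mu(\tilde{L}_{N_2})$, and applying the same invariance to $L_{N_2}$ gives $w\in V_\mu(\tilde{L}_{N_2})$ iff $(0,w)\in V_\mu(L_{N_2})$.

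I expect the main obstacle to be the converse of (iii): unlike the other assertions it is not verified by plugging a candidate vector into the defining equations, and a naive direct construction of $w$ runs into the possibility that $\mu$ is a shared eigenvalue of $L_{N_1}$ and $\tilde{L}_{N_2}$. Routing the argument through the multiplicity identity of Proposition~\ref{prop:eig_val} avoids that difficulty cleanly, at the cost of leaning on that earlier count. A minor point to state carefully is the meaning of ``eigenvector'' in (i): a nonzero $(v,c,w)\in\ker L_N$ may have $(c,w)=(0,0)$, so the biconditional there should be read at the level of eigenspaces (kernels) rather than of strictly nonzero eigenvectors.
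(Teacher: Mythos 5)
Your proof is correct, and its skeleton — the block form \eqref{eq:block_L}, the kernel equivalence for $\mu=0$, and the intertwining $\pi_1 L_N=L_{N_1}\pi_1$ for the forward implications — coincides with the paper's. Where you genuinely diverge is on the one nontrivial step, the existence of the lift $w$ in the converse of (iii). The paper constructs $w$ explicitly: it takes $k$ large enough that $(L_{N_1}-\mu I)^{k}(v,c)=0$ and $\RR^{n_2-1}=\Im\bigl((\underline{\overline{L}}_{N_2}-\mu I)^{k}\bigr)\oplus\ker\bigl((\underline{\overline{L}}_{N_2}-\mu I)^{k}\bigr)$ (a Fitting splitting), decomposes the coupling term $A_k(v,c)=w_I+w_K$ accordingly, solves $(\underline{\overline{L}}_{N_2}-\mu I)^{k}w=-w_I$, and verifies via \eqref{eq:block_k} that $(L_N-\mu I)^{2k}(v,c,w)=0$. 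You instead count dimensions: writing $V_\mu$ for generalized eigenspaces, $\ker(\pi_1|_{V_\mu(L_N)})=V_\mu(L_N)\cap U$ is identified with $V_\mu(\underline{\overline{L}}_{N_2})$ and so has dimension $m^2_a(\mu)$ (using that the characteristic polynomial of $L_{N_2}$ is $\lambda$ times that of $\underline{\overline{L}}_{N_2}$), whence Proposition~\ref{prop:eig_val}(ii) and rank--nullity force $\pi_1(V_\mu(L_N))=V_\mu(L_{N_1})$. This is shorter, avoids the choice of exponent and the Fitting manipulation, and makes the structural reason transparent (multiplicities add, so the projection must be onto); the cost is that it is non-constructive and leans on the earlier multiplicity count — which is legitimate, since Proposition~\ref{prop:eig_val} is proved independently of this one, so there is no circularity. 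The paper's construction, by contrast, is self-contained and yields a recipe for computing the lift, which is what the later examples actually use when exhibiting vectors such as $\hat{w}_2$. Your treatment of (iv) via invariance of $U$ is equivalent to the paper's appeal to the zero first row of $L_{N_2}$ and Proposition~\ref{prop:eigevaluenoninter}(ii), and your caveat that (i) must be read at the level of kernels (since $(c,w)$ may vanish even when $(v,c,w)\neq 0$) is a point the paper glosses over in exactly the same way.
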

\begin{proof}
We start with the case $\mu=0$.
Using (\ref{eq:block_L}), we see that $$L_{N_1\circ N_2} (v,c,w)=0 \iff L_{N_1}(v,c)=0 \wedge L_{N_2}(c,w)=0.$$
So (i) follows from this equivalence. 
And (ii) follows since $L_{N_1}(0,0)=0$.

For the case $\mu \ne 0$. Let $\underline{\overline{L}}_{N_2}$, be the submatrix of $\overline{L}_{N_2}$ obtained by removing the first column. Then, we have
\begin{equation} \label{eq:block_k}
(L_N - \mu I)^k=
\left[
\begin{array}{ccc}
(L_{N_1} - \mu I)^k & |  & 0_{n_1 \times (n_2-1)} \\
\hline
A_{k}  & | & (\underline{\overline{L}}_{N_2} - \mu I)^k
\end{array}
\right]\, .
\end{equation}
Thus, if $(L_N - \mu I)^k(v,c,w)=0$, then $(L_{N_1} - \mu I)^k(v,c)=0$. 
So, if $(v,c,w)$ is a $k$-generalized eigenvector of $L_{N}$ associated with $\mu$, then $(v,c)$ is a $k$-generalized eigenvector of $L_{N_1}$ associated with $\mu$. 
If $(0,0,w)$ is a $k$-generalized eigenvector of $L_{N}$ associated with $\mu$, then $(0,w)$ is a $k$-generalized eigenvector of $L_{N_2}$ associated with $\mu$. 
This proves one side of the equivalences in (iii) and (iv).

For the other direction in (iii), let $(v,c)$ be a $k_1$-generalized eigenvector of $L_{N_1}$ associated with $\mu$ and $k\geq k_1$ be a integer such that $(L_{N_1} - \mu I)^{k}(v,c)=0$ and $\RR^{n_2-1}= \Im((\underline{\overline{L}}_{N_2} - \mu I)^{k})\oplus \ker((\underline{\overline{L}}_{N_2} - \mu I)^{k})$. Note that $(L_N - \mu I)^{k+k}(v,c,w)$ is equal to
$$\begin{bmatrix}
(L_{N_1} - \mu I)^{2k}(v,c)\\
A_k(L_{N_1} - \mu I)^{k}(v,c)+(\underline{\overline{L}}_{N_2} - \mu I)^{k} A_k(v,c)+ (\underline{\overline{L}}_{N_2} - \mu I)^{2k}w
\end{bmatrix}.$$
Moreover, $A_k(v_1, v_c)= w_I+w_k$, where $w_I\in \Im((\underline{\overline{L}}_{N_2} - \mu I)^{k})$ and $w_k\in \ker((\underline{\overline{L}}_{N_2} - \mu I)^{k})$. 
So there exists $w$ such that 
$-w_I=(\underline{\overline{L}}_{N_2} - \mu I)^{k} w$. 
Since $(L_{N_1} - \mu I)^{k}(v_1,v_c)=0$, we have that 
$$(L_N - \mu I)^{k+k}(v,c,w)= 
\begin{bmatrix}
0\\
A_k 0+(\underline{\overline{L}}_{N_2} - \mu I)^{k}(w_I+w_k)- (\underline{\overline{L}}_{N_2} - \mu I)^{k}w_I
\end{bmatrix} =(0,0).$$
Then there exists $w$ such that $(v,c,w)$ is a generalized eigenvector of $L_{N}$ associated with $\mu$ and (iii) holds. 
The other direction in (iv) follows from the first row of $L_{N_2}$ being zero and Proposition~\ref{prop:eigevaluenoninter} $(ii)$.
\end{proof}

\begin{cor}\label{Rems:eigenvec_0}
Given a basis $(v_1,c_1), (v_2,c_2),\dots,(v_{g_1},c_{g_1})$ of the eigenspace $E_0$ for $L_{N_1}$ and a basis $(1,w_1), (0,w_2),\dots,(0,w_{g_2})$ of the eigenspace $E_0$ for $L_{N_2}$. 
Then there exist $\hat{w}_1,\hat{w}_2,\dots,\hat{w}_{g_1} \in \RR^{n_2-1}$ such that $$(v_1,c_1,\hat{w}_1), (v_2,c_2,\hat{w}_2),\dots,(v_{g_1},c_{g_1},\hat{w}_{g_1}),(0,0,w_2),\dots,(0,0,w_{g_2})$$ is a basis of the eigenspace $E_0$ for $L_N$.
\end{cor}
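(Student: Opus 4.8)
The plan is to prove the statement by a dimension count combined with the explicit eigenvector correspondence from Proposition~\ref{prop:eig_vec}. First I would record that, by Remarks~\ref{Rems:eigen_0}(i), the geometric multiplicity of $0$ for $L_N$ equals $m_g(0) = g_1 + g_2 - 1$, where $g_1 = m^1_g(0)$ and $g_2 = m^2_g(0)$ are the dimensions of the eigenspaces $E_0$ for $L_{N_1}$ and $L_{N_2}$. The proposed list contains exactly $g_1 + (g_2-1) = g_1 + g_2 - 1$ vectors, so it suffices to show (a) every vector in the list lies in $\ker L_N$, and (b) the list is linearly independent; the basis property then follows automatically.

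For part (a), the vectors $(0,0,w_j)$, $j = 2, \dots, g_2$, lie in $\ker L_N$ directly by Proposition~\ref{prop:eig_vec}(ii), since each $(0,w_j)$ is an eigenvector of $L_{N_2}$ for the eigenvalue $0$. For the vectors $(v_i,c_i,\hat{w}_i)$ the key observation is that the chosen basis of $E_0$ for $L_{N_2}$ has the special form guaranteed by Remarks~\ref{Rems:eigen_0}(ii): its first generator $(1,w_1)$ has first coordinate $1$, while the remaining generators have first coordinate $0$. Hence $(c_i, c_i w_1) = c_i(1,w_1)$ is an eigenvector of $L_{N_2}$ for the eigenvalue $0$, and setting $\hat{w}_i := c_i w_1$ makes $(c_i,\hat{w}_i)$ an eigenvector of $L_{N_2}$. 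Then Proposition~\ref{prop:eig_vec}(i), applied to $(v_i,c_i)$ (an eigenvector of $L_{N_1}$) and $(c_i,\hat{w}_i)$ (an eigenvector of $L_{N_2}$), gives $(v_i,c_i,\hat{w}_i)\in\ker L_N$.

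For part (b), I would exploit the coordinate structure. Given a vanishing combination $\sum_i a_i (v_i,c_i,\hat{w}_i) + \sum_j b_j (0,0,w_j) = 0$, reading off the first $n_1$ coordinates (the $(v,c)$-block) kills the $(0,0,w_j)$ terms and yields $\sum_i a_i (v_i,c_i) = 0$; linear independence of the $L_{N_1}$-basis forces all $a_i = 0$. What remains is $\sum_j b_j w_j = 0$, and since $(0,w_2),\dots,(0,w_{g_2})$ are part of a basis of $E_0$ for $L_{N_2}$ they are independent, so the $w_j$ are independent and all $b_j = 0$.

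The only genuinely delicate point is the construction of the $\hat{w}_i$, where one must use the normalized form of the $L_{N_2}$-basis: the simple choice $\hat{w}_i = c_i w_1$ works precisely because the single ``non-normalizable'' generator $(1,w_1)$ carries all the first-coordinate weight, so that $(c_i,\hat{w}_i)$ is a scalar multiple of it and hence automatically in $\ker L_{N_2}$. Everything else is the dimension bookkeeping from Remarks~\ref{Rems:eigen_0}(i) together with the equivalences already established in Proposition~\ref{prop:eig_vec}, and no further computation is needed.
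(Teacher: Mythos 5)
Your proof is correct and takes essentially the same route as the paper's: the paper's (much terser) proof likewise combines the semisimplicity and multiplicity count of Remarks~\ref{Rems:eigen_0} with the kernel correspondence of Proposition~\ref{prop:eig_vec}, and your explicit choice $\hat{w}_i = c_i w_1$ is exactly the construction left implicit there. Your additional details (the dimension count and the block-coordinate linear-independence argument) simply fill in steps the paper omits.
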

\begin{proof}
Recalling Remarks~\ref{Rems:eigen_0}, we know that the zero eigenvalue is always semisimple and, except for the eigenvector ${\bf 1}_{n_2}$, the other generators of the eigenspace $E_0$ for $L_{N_2}$ can be selected to have the $c$-coordinate equal to zero. The result then follows from Proposition~\ref{prop:eig_vec}.
\end{proof}

From Proposition~\ref{prop:eig_vec}, for an eigenvalue $\mu\neq 0$, an eigenvector of $L_{N_1}$ corresponds to a generalized eigenvector of $L_N$, but not necessarily an eigenvector of $L_N$.
In the following corollary, we give a condition on the edges from the coalescence cell $c$ to the other cells of $N_2$ for a strict correspondence.
Note that the edges from cell $c$ to the other cells of $N_2$ correspond to the $c$-column of the matrix $L_{N_2}$ excluding the $c$-row. 

\begin{cor}\label{cor:semisimpleeigen}
Let $\mu\neq 0$ be an eigenvalue of $L_{N}$. And denote by $L_{N_2}^c$ the $c$-column of $L_{N_2}$ excluding the $c$-row. If $L_{N_2}^c\in \Im((\underline{\overline{L}}_{N_2} - \mu I))$, then\\
(i) $(v_1,v_c,v_2)$, for some $v_2 \in \RR^{n_2-1}$, is an eigenvector of $L_{N}$ associated with $\mu$  iff $(v_1,v_c)$ is an eigenvector of $L_{N_1}$ associated with $\mu$.
Furthermore, $v_2$ can be chosen such that $(0,v_2)$ is not an eigenvector of $L_{N_2}$ associated with $\mu$. \\
(ii) $\mu$ is a semisimple eigenvalue of $L_{N}$ iff $\mu$ is a semisimple eigenvalue of $L_{N_1}$ and $L_{N_2}$. 
\end{cor}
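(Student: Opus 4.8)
The plan is to read everything off the block lower-triangular form of $L_N - \mu I$ supplied by (\ref{eq:block_k}) with $k=1$, namely
\[
L_N - \mu I = \begin{bmatrix} L_{N_1} - \mu I & 0 \\ A_1 & \underline{\overline{L}}_{N_2} - \mu I \end{bmatrix},
\]
where, because $N$ is feedforward, the only coupling from the first layer into $N_2\setminus\{c\}$ runs through the coalescence cell. Concretely $A_1 = [\,0 \mid L_{N_2}^c\,]$, so that $A_1(v_1,v_c) = v_c\, L_{N_2}^c$. The whole statement then reduces to solving one linear system.

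For part (i) the forward implication needs no hypothesis: if $(L_N - \mu I)(v_1,v_c,v_2) = 0$, the top block gives $(L_{N_1} - \mu I)(v_1,v_c) = 0$, so a nonzero $(v_1,v_c)$ is an eigenvector of $L_{N_1}$ (this is one direction of Proposition~\ref{prop:eig_vec}(iii)). For the converse I would start from an eigenvector $(v_1,v_c)$ of $L_{N_1}$ and solve the bottom block, i.e.\ seek $v_2$ with $(\underline{\overline{L}}_{N_2} - \mu I)v_2 = -v_c L_{N_2}^c$. This is solvable exactly because $v_c L_{N_2}^c \in \Im(\underline{\overline{L}}_{N_2} - \mu I)$, which is the standing hypothesis $L_{N_2}^c \in \Im(\underline{\overline{L}}_{N_2} - \mu I)$ scaled by $v_c$. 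This is the crux: the image condition is precisely what upgrades the generalized eigenvector produced by Proposition~\ref{prop:eig_vec}(iii) into a genuine eigenvector. For the ``furthermore'' clause I would use that the solution set is the affine coset $v_2^\ast + \ker(\underline{\overline{L}}_{N_2} - \mu I)$. If $v_c \neq 0$ then, since $N_2$ is connected with at least two cells, cell $c$ has an outgoing edge inside $N_2$, so $L_{N_2}^c \neq 0$ and hence $(\underline{\overline{L}}_{N_2} - \mu I)v_2 = -v_c L_{N_2}^c \neq 0$ for every solution, forcing $v_2 \notin \ker(\underline{\overline{L}}_{N_2} - \mu I)$; if $v_c = 0$ I would simply take $v_2 = 0$. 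Since the first row of $L_{N_2}$ vanishes, $(0,v_2)$ is an eigenvector of $L_{N_2}$ for $\mu$ iff $v_2 \in \ker(\underline{\overline{L}}_{N_2} - \mu I)$ with $v_2\neq 0$, so in both cases $(0,v_2)$ fails to be such an eigenvector.

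Part (ii) I would deduce by a dimension count built on part (i). Consider the projection $P(v_1,v_c,v_2) = (v_1,v_c)$ restricted to $\ker(L_N - \mu I)$. The forward direction of (i) shows $P$ maps into $\ker(L_{N_1} - \mu I)$, and the converse shows it maps onto it, so $\dim P(\ker(L_N - \mu I)) = m^1_g(\mu)$. Its kernel is $\{(0,0,v_2): v_2 \in \ker(\underline{\overline{L}}_{N_2} - \mu I)\}$; since $\mu \neq 0$ and the first row of $L_{N_2}$ is zero, every eigenvector of $L_{N_2}$ has vanishing $c$-coordinate, so this kernel has dimension $m^2_g(\mu)$. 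Rank--nullity gives $m_g(\mu) = m^1_g(\mu) + m^2_g(\mu)$. Combining with $m_a(\mu) = m^1_a(\mu) + m^2_a(\mu)$ from Proposition~\ref{prop:eig_val} and the universal inequalities $m^i_g \le m^i_a$, the equality $m_g(\mu) = m_a(\mu)$ holds iff $m^1_g = m^1_a$ and $m^2_g = m^2_a$, which is exactly semisimplicity of $\mu$ for both $L_{N_1}$ and $L_{N_2}$.

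I expect the main obstacle to be the bookkeeping in the converse of (i) and the ``furthermore'' clause: correctly identifying $A_1(v_1,v_c) = v_c L_{N_2}^c$, handling the split $v_c = 0$ versus $v_c \neq 0$, and justifying $L_{N_2}^c \neq 0$ from the connectivity of $N_2$. Once the image hypothesis is in place everything else is routine linear algebra, and part (ii) is a purely formal consequence of the surjectivity established in part (i) together with Proposition~\ref{prop:eig_val}.
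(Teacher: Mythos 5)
Your proof is correct and takes essentially the same route as the paper's: both read everything off the block form (\ref{eq:block_k}) with $k=1$, identify $A_1(v_1,v_c)=v_c L_{N_2}^c$, use the hypothesis $L_{N_2}^c\in \Im(\underline{\overline{L}}_{N_2}-\mu I)$ to solve the bottom block for $v_2$, and then get (ii) by combining the resulting equality $m_g(\mu)=m^1_g(\mu)+m^2_g(\mu)$ with $m_a(\mu)=m^1_a(\mu)+m^2_a(\mu)$ from Proposition~\ref{prop:eig_val}. If anything you are slightly more explicit than the paper --- the connectivity argument giving $L_{N_2}^c\neq 0$ in the ``furthermore'' clause, and the rank--nullity bookkeeping behind the geometric multiplicity count --- but the substance is identical.
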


\begin{proof}
Consider the block structure of $L_N - \mu I$, in (\ref{eq:block_k}) for $k=1$. Since the first $n_1-1$ columns of $A_1$ are zero, we have that 
$$(L_N - \mu I)(v_1,v_c,v_2)=
\begin{bmatrix}
(L_{N_1} - \mu I)(v_1,v_c)\\
A_1(v_1,v_c)+ (\underline{\overline{L}}_{N_2} - \mu I)v_2
\end{bmatrix}=
 \begin{bmatrix}
(L_{N_1} - \mu I)(v_1,v_c)\\
v_c L_{N_2}^c+ (\underline{\overline{L}}_{N_2} - \mu I)v_2
\end{bmatrix}.$$
It follows from $L_{N_2}^c\in \Im((\underline{\overline{L}}_{N_2} - \mu I))$, that there exists $v_2$ such that $(\underline{\overline{L}}_{N_2} - \mu I)v_2=-v_cL_{N_2}^c$. Taking that $v_2$, we have that 
$$(L_N - \mu I)(v_1,v_c,v_2)=0 \iff (L_{N_1} - \mu I)(v_1,v_c)=0.$$
This proves the equivalence in the first item (i). 
If $v_c\neq 0$, then $v_2$ is not an eigenvector of $L_{N_2}$ associated with $\mu$, because the first row of $L_{N_2}$ is zero and $\mu\neq 0$. 
If $v_c= 0$, as in Proposition~\ref{prop:eigevaluenoninter}, we can take $v_2=0$ which is not an eigenvector of $L_{N_2}$ associated with $\mu$. This proves the first part of the result (i).

From (i) and Proposition~\ref{prop:eig_vec}, we have that $m_g(\mu) = m^1_g(\mu) + m^2_g(\mu)$ and we know from Proposition~\ref{prop:eig_val}~(ii) that $m_a(\mu) = m^1_a(\mu) + m^2_a(\mu)$. This proves (ii).
\end{proof}

\begin{rem} \label{rmk:geneig}
Assume that $\mu\neq 0$ is an eigenvalue of $L_{N}$, $L_{N_2}^c\notin \Im((\underline{\overline{L}}_{N_2} - \mu I))$  and $(v_1,v_c)$, with $v_c\neq 0$, is an eigenvector of $L_{N_1}$ associated with $\mu$. 
Then, $(v_1,v_c,v_2)$ is a generalized eigenvector of $L_{N}$ associated with $\mu$, for some $v_2\in \RR^{n_2-1}$, such that $v_2$ is not an eigenvector of $\underline{\overline{L}}_{N_2}$.
Furthermore, the eigenvalue $\mu$ is not semisimple for $L_{N}$, regardless of being semisimple for $L_{N_1}$ and $L_{N_2}$ or not.
\end{rem}

Next, we analyze some examples that illustrate the previous results.
In the first example, we consider the network $6$ studied in ~\cite{ADGL09}. 
In this case the Laplacian of the component networks share a nontrivial semisimple eigenvalue ($\mu\neq0$).
Moreover, the outgoing connections from the merging cell to $N_2$ belong to the appropriate image.
So the semisimplicty of that eigenvalue is preserved in the Laplacian of the coalescence network as stated in Corollary~\ref{cor:semisimpleeigen}.

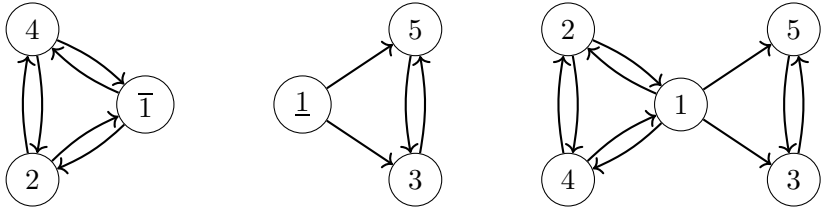
\begin{figure}[!h]
\begin{center}
\begin{tabular}{ccc}
\begin{tikzpicture}
 [scale=.25,auto=left, node distance=1.5cm]
 \node[style={circle,draw}] (n4) at (4,4) {\small{4}};
  \node[style={circle,draw}] (n1) at (10,0) {\small{$\overline{1}$}};
 \node[style={circle,draw}] (n2) at (4,-4) {\small{2}};
  \path
 (n1) edge[->, thick, bend left=10]  node  [above=0.1pt] {} (n2)
 (n1) edge[->, thick, bend left=10]  node  [above=0.1pt] {} (n4)
 (n2) edge[->, thick, bend left=10,]  node  [below=0.1pt] {} (n1)
 (n2) edge[->, thick, bend left=10,]  node  [below=0.1pt] {} (n4)
 (n4) edge[->, thick, bend left=10,]  node  [below=0.1pt] {} (n1)
 (n4) edge[->, thick, bend left=10,]  node  [below=0.1pt] {} (n2); 
\end{tikzpicture}  \qquad & \qquad 
\begin{tikzpicture}
 [scale=.25,auto=left, node distance=1.5cm]
 \node[style={circle,draw}] (n5) at (10,4) {\small{5}};
  \node[style={circle,draw}] (n1) at (4,0) {\small{$\underline{1}$}};
 \node[style={circle,draw}] (n3) at (10,-4) {\small{3}};
  \path
 (n1) edge[->, thick, bend right=0]  node  [below=0.1pt] {} (n5)
 (n1) edge[->, thick, bend right=0]  node  [below=0.1pt] {} (n3)
 (n5) edge[->, thick, bend right=10,]  node  [above=0.1pt] {} (n3)
 (n3) edge[->, thick, bend right=10,]  node  [above=0.1pt] {} (n5); 
\end{tikzpicture}  \qquad & \qquad 
\begin{tikzpicture}
 [scale=.25,auto=left, node distance=1.5cm]
 \node[style={circle,draw}] (n2) at (4,4) {\small{2}};
  \node[style={circle,draw}] (n1) at (10,0) {\small{$1$}};
 \node[style={circle,draw}] (n4) at (4,-4) {\small{4}};
 \node[style={circle,draw}] (n5) at (16,4) {\small{5}};
 \node[style={circle,draw}] (n3) at (16,-4) {\small{3}};
  \path
  (n1) edge[->, thick, bend left=10]  node  [above=0.1pt] {} (n2)
 (n1) edge[->, thick, bend left=10]  node  [above=0.1pt] {} (n4)
 (n2) edge[->, thick, bend left=10,]  node  [below=0.1pt] {} (n1)
 (n2) edge[->, thick, bend left=10,]  node  [below=0.1pt] {} (n4)
 (n4) edge[->, thick, bend left=10,]  node  [below=0.1pt] {} (n1)
 (n4) edge[->, thick, bend left=10,]  node  [below=0.1pt] {} (n2)
 (n1) edge[->, thick, bend right=0]  node  [below=0.1pt] {} (n5)
 (n1) edge[->, thick, bend right=0]  node  [below=0.1pt] {} (n3)
 (n5) edge[->, thick, bend right=10,]  node  [above=0.1pt] {} (n3)
 (n3) edge[->, thick, bend right=10,]  node  [above=0.1pt] {} (n5); 
 \end{tikzpicture} 
\end{tabular}
\end{center}
\caption{The network $N_1 \circ N_2$  (right), network $6$ in ~\cite{ADGL09}, is the coalescence of the two 3-cell motis $N_1$ and $N_2$ (left and center), by merging cells $\overline{1}$ and $\underline{1}$.} \label{Fig:coal_3-cell_net6}
\end{figure}

\begin{exam}\label{exam:net6}
 The network $6$ in ~\cite{ADGL09}, see Figure~\ref{Fig:coal_3-cell_net6}, is the coalescence of the two 3-cell motis $N_1$ and $N_2$ with the following Laplacian matrices:
$$
L_{N_1} =
\left[
\begin{array}{ccc}
2 & -1 & -1 \\
-1 & 2  & -1 \\
-1 & -1 & 2 \\
\end{array}
\right]\, 
$$
and 
$$
L_{N_2} =
\left[
\begin{array}{ccc}
0 & 0 & 0 \\
-1 & 2  & -1 \\
-1 & -1 & 2 \\
\end{array}
\right]\, . 
$$
The Laplacian $L_{N_1}$ has eigenvalues $0, 3$ and
$$
E_0=<(1,1,1)> \quad \mbox{and} \quad E_{3}=<(1,0,-1), (0,1,-1)>.
$$
The Laplacian matrix of $N_2$ has eigenvalues $0, 1, 3$ and
$$
E_0=<(1,1,1)>, \quad E_1=<(0,1,1)> \quad \mbox{and} \quad E_3=<(0,1,-1)>.
$$

The Laplacian matrix of $N=N_1 \circ N_2$ where the third cell of $N_1$ is merged with the first cell of $N_2$ is
$$
L_{N} =
\left[
\begin{array}{ccccc}
2 & -1 & -1 & 0 & 0 \\
-1 & 2  & -1 & 0 & 0\\
-1 & -1 & 2 & 0 & 0 \\
0 & 0 & -1 & 2  & -1 \\
0 & 0 & -1 & -1 & 2 \\
\end{array}
\right]\,. 
$$
It follows from Proposition~\ref{prop:eig_val} that the eigenvalues of $L_N$ are $0, 1, 3$.
Using Proposition~\ref{prop:eig_vec}, we know that
$$
E_0=<(1,1,1,1,1)>, \quad E_1=<(0,0,0,1,1)>. 
$$
Moreover, we have that $L_{N_2}^c = (-1,-1)\in \Im((\underline{\overline{L}}_{N_2} - 3I))$. By Corollary~\ref{cor:semisimpleeigen}, we know that the eigenvalue $3$ is also semisimple for $L_N$. Concretely, we have that  
\[E_3=<(1,0,-1,0,1), (0,1,-1,0,1), (0,0,0,1,-1)>. \qedhere\] 
\end{exam}

In the second example the Laplacians of the component networks $N_1$ and  $N_2$ share a nontrivial eigenvalue ($\mu\neq 0$) which is simple for both of them.
In this case the outgoing connections from the merging cell to $N_2$ do not belong to the appropriate image.
Following Remark~\ref{rmk:geneig}, that eigenvalue is degenerated for Laplacian of  the coalescence network $N_1 \circ N_2$. 

\begin{figure}[!h]
\begin{center}
\begin{tabular}{ccc}
\begin{tikzpicture}
 [scale=.25,auto=left, node distance=1.5cm]
 \node[style={circle,draw}] (n2) at (10,4) {\small{2}};
  \node[style={circle,draw}] (n1) at (4,0) {\small{1}};
 \node[style={circle,draw}] (n3) at (10,-4) {\small{$\overline{3}$}};
  \path
  (n1) edge[->, thick]  node  [above=0.1pt] {} (n2)
 (n1) edge[->, thick]  node  [above=0.1pt] {} (n3)
 (n2) edge[->, bend left=10, thick]  node  [below=0.1pt] {} (n3)
 (n3) edge[->, bend left=10, thick]  node  [below=0.1pt] {} (n2); 
\end{tikzpicture}  \qquad & \qquad
\begin{tikzpicture}
 [scale=.25,auto=left, node distance=1.5cm]
 \node[style={circle,draw}] (n4) at (10,4) {\small{4}};
  \node[style={circle,draw}] (n3) at (4,0) {\small{$\underline{3}$}};
 \node[style={circle,draw}] (n5) at (10,-4) {\small{5}};
  \path
(n3) edge[->, thick]  node  [above=0.1pt] {} (n4)
 (n3) edge[->, thick]  node  [above=0.1pt] {} (n5)
  (n4) edge[->, bend left=10, thick]  node  [below=0.1pt] {} (n5)
	 (n5) edge[->, bend left=10, thick]  node  [left=0.1pt] {{\footnotesize $2$}} (n4); 
\end{tikzpicture}  \qquad & \qquad
\begin{tikzpicture}
 [scale=.25,auto=left, node distance=1.5cm]
 \node[style={circle,draw}] (n2) at (10,4) {\small{2}};
  \node[style={circle,draw}] (n1) at (4,0) {\small{1}};
 \node[style={circle,draw}] (n3) at (10,-4) {\small{3}};
 \node[style={circle,draw}] (n4) at (16,4) {\small{4}};
 \node[style={circle,draw}] (n5) at (16,-4) {\small{5}};
  \path
   (n1) edge[->, thick]  node  [above=0.1pt] {} (n2)
 (n1) edge[->, thick]  node  [above=0.1pt] {} (n3)
 (n2) edge[->, bend left=10, thick]  node  [below=0.1pt] {} (n3)
 (n3) edge[->, bend left=10, thick]  node  [below=0.1pt] {} (n2)
(n3) edge[->, thick]  node  [above=0.1pt] {} (n4)
 (n3) edge[->, thick]  node  [above=0.1pt] {} (n5)
  (n4) edge[->, bend left=10, thick]  node  [below=0.1pt] {} (n5)
	(n5) edge[->, bend left=10, thick]  node  [left=0.1pt] {{\footnotesize $2$}} (n4); 
 \end{tikzpicture} 
\end{tabular}
\end{center}
\caption{The network $N_1 \circ N_2$  (right) is the coalescence of the two 3-cell motis $N_1$ and $N_2$ (left and center), by merging cells $\overline{3}$ and $\underline{3}$.} \label{Fig:ex_gener_4}
\end{figure}

\begin{exam}\label{exam:ex_gener_4}
Consider the networks $N_1,  N_2$ and $N=N_1 \circ N_2$ in Figure~\ref{Fig:ex_gener_4}. The Laplacian matrix of $N_1$ 
$$
L_{N_1}  =
\left[
\begin{array}{ccc}
0 & 0 & 0 \\
-1 & 2  & -1 \\
-1 & -1 & 2 \\
\end{array}
\right]\, 
$$
has eigenvalues $0, 1, 3$ with
$$
E_0=<(1,1,1)>, \quad E_1=<(0,1,1)> \quad \mbox{and} \quad E_3=<(0,1,-1)>.
$$

The Laplacian matrix of $N_2$ 
$$
L_{N_2}  =
\left[
\begin{array}{ccc}
0 & 0 & 0 \\
-1 & 3  & -2 \\
-1 & -1 & 2 \\
\end{array}
\right]\, 
$$
has eigenvalues $0, 1, 4$ with
$$
E_0=<(1,1,1)>, \quad E_1=<(0,1,1)> \quad \mbox{and} \quad E_4=<(0,-2,1)>.
$$

It follows from Propositions~\ref{prop:eig_val} and ~\ref{prop:eig_vec} that  the Laplacian matrix of $N=N_1 \circ N_2$ 
$$
L_{N} =
\left[
\begin{array}{ccccc}
0 & 0 & 0 & 0 & 0 \\
-1 & 2  & -1 & 0 & 0\\
-1 & -1 & 2 & 0 & 0 \\
0 & 0 & -1 & 3  & -2 \\
0 & 0 & -1 & -1 & 2 \\
\end{array}
\right]\, 
$$
has eigenvalues $0, 1,3,4$, with
$$ E_0=<(1,1,1,1,1)>, \quad E_3=<(0,1,-1,1/2,1/2)>, \quad E_4=<(0,0,0,-2,1)>. $$

Moreover, given the eigenvector $(0,1,1)$ of $L_{N_2}$, it follows that $(0,0,0,1,1)$ is an eigenvector of $L_N$ associated with $1$. 
Note that $L_{N_2}^c =(-1,-1)\notin \Im((\underline{\overline{L}}_{N_2} - I))$ and the eigenvector $(0,1,1)$ of $L_{N_1}$ corresponds to the generalized eigenvector $(0,1,1,-1,-1)$ of $L_N$, according with Remark~\ref{rmk:geneig}.
So
\[
E_1=<(0,0,0,1,1)> \mbox{and} \quad G_1=<(0,0,0,1,1), (0,1,1,-1,-1)>. \qedhere\]
\end{exam}

In the following example, the Laplacians of the component networks $N_1$ and $N_2$ share a nontrivial eigenvalue ($\mu\neq 0$) which is degenerated for $N_2$. 
The outgoing connection from the merging cell to $N_2$ do not belong to the apropriate image
and the degeneracy of that eigenvalue increases for the Laplacian of the coalescence network $N_1 \circ N_2$.

\begin{figure}[!h]
\begin{center}
\begin{tabular}{ccc}
\begin{tikzpicture}
 [scale=.25,auto=left, node distance=1.5cm]
 \node[style={circle,draw}] (n2) at (10,4) {\small{2}};
  \node[style={circle,draw}] (n1) at (4,0) {\small{1}};
 \node[style={circle,draw}] (n3) at (10,-4) {\small{$\overline{3}$}};
  \path
  (n1) edge[->, thick]  node  [above=0.1pt] {} (n2)
 (n1) edge[->, thick]  node  [above=0.1pt] {} (n3)
(n2) edge [loop above, thick] node {} (n2)   
 (n2) edge[->, thick]  node  [below=0.1pt] {} (n3); 
\end{tikzpicture}  \qquad & \qquad
\begin{tikzpicture}
 [scale=.25,auto=left, node distance=1.5cm]
 \node[style={circle,draw}] (n4) at (10,4) {\small{4}};
  \node[style={circle,draw}] (n3) at (4,0) {\small{$\underline{3}$}};
 \node[style={circle,draw}] (n5) at (10,-4) {\small{5}};
  \path
(n3) edge [loop above, thick] node {{\footnotesize $2$}} (n3)  
(n3) edge[->, thick]  node  [above=0.1pt] {{\footnotesize $2$}} (n4)
 (n3) edge[->, thick]  node  [above=0.1pt] {} (n5)
  (n4) edge[->, thick]  node  [below=0.1pt] {} (n5); 
\end{tikzpicture}  \qquad & \qquad
\begin{tikzpicture}
 [scale=.25,auto=left, node distance=1.5cm]
 \node[style={circle,draw}] (n2) at (10,4) {\small{2}};
  \node[style={circle,draw}] (n1) at (4,0) {\small{1}};
 \node[style={circle,draw}] (n3) at (10,-4) {\small{3}};
 \node[style={circle,draw}] (n4) at (16,4) {\small{4}};
 \node[style={circle,draw}] (n5) at (16,-4) {\small{5}};
  \path
  (n1) edge[->, thick]  node  [above=0.1pt] {} (n2)
 (n1) edge[->, thick]  node  [above=0.1pt] {} (n3)
(n2) edge [loop above, thick] node {} (n2)   
 (n2) edge[->, thick]  node  [below=0.1pt] {} (n3)
(n3) edge [loop left, thick] node {{\footnotesize $2$}} (n3)  
(n3) edge[->, thick]  node  [above=0.1pt] {{\footnotesize $2$}} (n4)
 (n3) edge[->, thick]  node  [above=0.1pt] {} (n5)
  (n4) edge[->, thick]  node  [below=0.1pt] {} (n5); 
 \end{tikzpicture} 
\end{tabular}
\end{center}
\caption{The network $N_1 \circ N_2$  (right) is the coalescence of the two 3-cell motis $N_1$ and $N_2$ (left and center), by merging cells $\overline{3}$ and $\underline{3}$.} \label{Fig:ex_gener_1}
\end{figure}

\begin{exam}\label{exam:ex_gener_1}
Consider the networks $N_1,  N_2$ and $N=N_1 \circ N_2$ in Figure~\ref{Fig:ex_gener_1}. The Laplacian matrix of $N_1$ 
$$
L_{N_1}  =
\left[
\begin{array}{ccc}
0 & 0 & 0 \\
-1 & 1  & 0 \\
-1 & -1 & 2 \\
\end{array}
\right]\, 
$$
has eigenvalues $0, 1, 2$ with
$$
E_0=<(1,1,1)>, \quad E_1=<(0,1,1)> \quad \mbox{and} \quad E_2=<(0,0,1)>.
$$

The Laplacian matrix of $N_2$ 
$$
L_{N_2}  =
\left[
\begin{array}{ccc}
0 & 0 & 0 \\
-2 & 2  & 0 \\
-1 & -1 & 2 \\
\end{array}
\right]\, 
$$
has eigenvalues $0, 2$ with
$$
E_0=<(1,1,1)>, \quad E_2=<(0,0,1)> \quad \mbox{and} \quad G_2=<(0,0,1), (0,-1,1)>.
$$

It follows from Proposition~\ref{prop:eig_val} that, the Laplacian matrix of $N=N_1 \circ N_2$, 
$$
L_{N} =
\left[
\begin{array}{ccccc}
0 & 0 & 0 & 0 & 0 \\
-1 & 1  & 0 & 0 & 0\\
-1 & -1 & 2 & 0 & 0 \\
0 & 0 & -2 & 2  & 0 \\
0 & 0 & -1 & -1 & 2 \\
\end{array}
\right]\, 
$$
has eigenvalues $0, 1, 2$.

Using Corollary~\ref{Rems:eigenvec_0}, we have that 
$$ E_0=<(1,1,1,1,1)>.$$

Since $1$ is not an eigenvalue of $L_{N_2}$ and $\underline{\overline{L}}_{N_2}$, we know that $L_{N_2}^c =(-2,-1)\in \Im((\underline{\overline{L}}_{N_2} - I))$. It follows, from Corollary~\ref{cor:semisimpleeigen}, that the eigenvalue $1$ is semisimple for $L_N$ and there exists $\hat{w}_1\in\RR^2$ such that $(0,1,1,\hat{w}_1)$ is an eigenvector of $L_N$ associated with $1$. In this case, we have $$ E_1=<(0,1,1,2,3)>.$$

The eigenvector $(0,0,1)$ of $L_{N_2}$ associated with the eigenvalue $2$ vanishes in the merging coordinate. 
Using Proposition~\ref{prop:eigevaluenoninter}, we know that $(0,0,0,0,1)$ is an eigenvector of $L_N$ associated with $2$. 
Moreover, the generalized eigenvector $(0,-1,1)$ of $L_{N_2}$ associated with $2$ also vanishes in the merging coordinate. 
It follows from Proposition~\ref{prop:eig_vec}~(iv) that $(0,0,0,-1,1)$ is a generalized eigenvector of $L_N$. 
The eigenvector $(0,0,1)$ of $L_{N_1}$ corresponds to a generalized eigenvector $(0,0,1,\hat{w}_2)$ for some $\hat{w}_2\in\RR^2$, because $L_{N_2}^c =(-2,-1)\notin \Im((\underline{\overline{L}}_{N_2} - 2I))$ and the merging coordinate of the eigenvector $(0,0,1)$ is nonzero. 
In this example $\hat{w}_2=(-3,0)$,
\[ E_2=<(0,0,0,0,1)>
\mbox{and} \quad G_2=<(0,0,0,0,1), (0,0,0,-1,1), (0,0,1,-3,0)>.\qedhere\]
\end{exam}

In the last example, the component networks $N_1$ and $N_2$ have the same eigenvalues, which are semisimple, and the coalescence network preserves the semisimplicity of the eigenvalues. 
In this case the outgoing connections from the merging cell to $N_2$ do not belong to the appropriate image, however the eigenvectors associated with the nontrivial eigenvalue ($\mu\neq 0$) vanish at the merging cell.

\begin{figure}[!h]
\begin{center}
\begin{tabular}{ccc}
\begin{tikzpicture}
 [scale=.25,auto=left, node distance=1.5cm]
 \node[style={circle,draw}] (n1) at (4,4) {\small{1}};
  \node[style={circle,draw}] (n3) at (10,0) {\small{2}};
 \node[style={circle,draw}] (n2) at (4,-4) {\small{$\overline{3}$}};
  \path
(n1) edge [loop above, thick] node {{\footnotesize $2$}} (n1)   
 (n1) edge[->, thick]  node  [above=0.1pt] {} (n3)
(n2) edge [loop above, thick] node {{\footnotesize $2$}} (n2)   
 (n2) edge[->, thick]  node  [below=0.1pt] {} (n3); 
\end{tikzpicture}  \qquad & \qquad

\begin{tikzpicture}
 [scale=.25,auto=left, node distance=1.5cm]
 \node[style={circle,draw}] (n1) at (4,4) {\small{$\underline{3}$}};
  \node[style={circle,draw}] (n3) at (10,0) {\small{$4$}};
 \node[style={circle,draw}] (n2) at (4,-4) {\small{$5$}};
  \path
(n1) edge [loop above, thick] node {{\footnotesize $2$}} (n1)   
 (n1) edge[->, thick]  node  [above=0.1pt] {} (n3)
(n2) edge [loop above, thick] node {{\footnotesize $2$}} (n2)   
 (n2) edge[->, thick]  node  [below=0.1pt] {} (n3); 
\end{tikzpicture}  \qquad & \qquad 
\begin{tikzpicture}
 [scale=.25,auto=left, node distance=1.5cm]
 \node[style={circle,draw}] (n1) at (4,4) {\small{1}};
  \node[style={circle,draw}] (n2) at (7,-4) {\small{$2$}};
 \node[style={circle,draw}] (n3) at (10,4) {\small{3}};
 \node[style={circle,draw}] (n4) at (13,-4) {\small{4}};
 \node[style={circle,draw}] (n5) at (16,4) {\small{5}};
  \path
(n1) edge [loop above, thick] node {{\footnotesize $2$}} (n1)   
 (n1) edge[->, thick]  node  [above=0.1pt] {} (n2)
(n3) edge[->, thick]  node  [above=0.1pt] {} (n2)
(n3) edge [loop above, thick] node {{\footnotesize $4$}} (n3)   
(n3) edge[->, thick]  node  [above=0.1pt] {} (n4)
(n5) edge[->, thick]  node  [below=0.1pt] {} (n4) 
(n5) edge [loop above, thick] node {{\footnotesize $2$}} (n5) ;  
 \end{tikzpicture} 
\end{tabular}
\end{center}
\caption{The network $N_1 \circ N_2$  (right) is the coalescence of the two 3-cell motis $N_1$ and $N_2$ (left and center), by merging cells $\overline{3}$ and $\underline{3}$.} \label{Fig:ex_ma(0)>1}
\end{figure}

\begin{exam}\label{exam:ex_ma(0)>1} 
Consider the networks $N_1,  N_2$ and $N=N_1 \circ N_2$ in Figure~\ref{Fig:ex_ma(0)>1}. The Laplacian matrices of $N_1$ and $N_2$ are equal
$$
L_{N_1} =L_{N_2} =
\left[
\begin{array}{ccc}
0 & 0 & 0 \\
-1 & 2  & -1 \\
0 & 0 & 0 \\
\end{array}
\right]\, 
$$
and have eigenvalues $0, 2$ with
$$
E_0=<(1,1,1),  (0,1,2)> \quad \mbox{and} \quad E_2=<(0,1,0)>.
$$

According to Proposition~\ref{prop:eig_val}, the Laplacian matrix of $N=N_1 \circ N_2$, 
$$
L_{N} =
\left[
\begin{array}{ccccc}
0 & 0 & 0 & 0 & 0 \\
-1 & 2  & -1 & 0 & 0\\
0 & 0 & 0 & 0 & 0 \\
0 & 0 & -1 & 2  & -1 \\
0 & 0 & 0 & 0 & 0 \\
\end{array}
\right]\, 
$$
has the eigenvalues $0, 2$.

Using Corollary~\ref{Rems:eigenvec_0}, we know that there exist $\hat{w}_1, \hat{w}_2\in\RR^2$ such that $(1,1,1,\hat{w}_1)$, $(0,1,2,\hat{w}_2)$ and $(0,0,0,1,2)$ is a basis of the eigenspace $E_0$ for $L_{N}$.
In this example, $\hat{w}_1=(1,1)$ and $\hat{w}_2=(0,-2)$. Thus
$$
E_0=<(1,1,1,1,1), (0,1,2,0,-2), (0,0,0,1,2)>$$

The eigenvectors associated with the eigenvalue $2$ have the coalescence coordinate equal to zero. So, we can apply Proposition~\ref{prop:eigevaluenoninter} to conclude that $(0,1,0,0,0)$ and $(0,0,0,1,0)$ are eigenvectors of $L_N$ associated with the eigenvalue $2$. Thus
\[ E_2=<(0,1,0,0,0), (0,0,0,1,0)>.\]
In this case we have that
$L_{N_2}^c = (-1,0)\notin \Im(\left[
\begin{array}{ccccc}
0 & -1 \\
0 & -2 \\
\end{array}
\right])$ .
Despite the assumption in Corollary~\ref{cor:semisimpleeigen} does not hold, however the eigenvalues in the networks $N_1$ and $N_2$ associated with the eigenvalue $2$ vanish in the merging coordinate. So the eigenvalue remains semisimple in the coalescence network. 
\end{exam}

Propositions~\ref{prop:eig_val} and \ref{prop:eig_vec} generalize for feedforward coalescence networks given by the coalescence of more than two networks. 
In fact, given such a network $N = N_1 \circ N_2 \circ \cdots \circ N_r$, with $r \geq 1$, we can see it, recursively, as $N = M_{r-1} \circ N_r$ by merging the last cell in $M_{r-1}$ with the first cell in $N_r$, with $M_{1}= N_1$.
Thus a recursive application of Propositions~\ref{prop:eig_val} and \ref{prop:eig_vec} leads to the following result. 


\begin{prop} \label{prop:eig_geral}
Let $N = N_1 \circ N_2 \circ \cdots \circ N_r$ be a FFCN where the networks $N_1, N_2, \cdots, N_r$ have $n_1,n_2,\cdots,n_r$ cells, respectively.

\vspace{0.1in}

\noindent [1] The eigenvalues of $L_N$ are the union of the eigenvalues of $L_{N_i}$, $i=1, \ldots, r$, with \\
\noindent  (i)  $m_a(0) = \sum_{i=1}^r m^i_a(0) -r + 1$; \\
\noindent (ii) $m_a(\mu) =\sum_{i=1}^r m^i_a(\mu)$, for $\mu \ne 0$.

\vspace{0.1in}

\noindent [2] For each eigenvalue $\mu$ of $L_{N}$, a (generalized) eigenvector of $L_{N}$ associated with $\mu$ is of the form $(v_1, w_2, \ldots, w_r)$,
$(0_{{\footnotesize\R^{n_1 + \ldots + n_{i-1}-i+1}}}, v_i, w_{i+1}, \ldots, w_r)$, with $i=2, \ldots, r-1$,
or $(0_{{\footnotesize\R^{n_1 + \ldots + n_{r-1}-r+1}}}, v_r)$.

\vspace{0.05in}
\noindent For $\mu  \ne 0$,\\
\noindent (i) $(v_1, w_2, \ldots, w_r)$ is a generalized eigenvector of $L_{N}$ associated with $\mu$, for some $w_j \in \R^{n_j-1}$, $j=2,\ldots,r$, iff $v_1$ is a (generalized) eigenvector of $L_{N_1}$ associated with $\mu$;

\noindent (ii) $(0_{{\footnotesize\R^{n_1 + \ldots + n_{i-1}-i+1}}}, v_i, w_{i+1}, \ldots, w_r)$, with $i=2, \ldots, r-1$, is a generalized eigenvector of $L_{N}$ associated with $\mu$ for some $w_j \in \R^{n_j-1}$, $j=i+1,\ldots,r$,  iff $v_i$ is a (generalized) eigenvector of $L_{N_i}$ associated with $\mu$;

\noindent (iii) $(0_{{\footnotesize\R^{n_1 + \ldots + n_{r-1}-r+1}}}, v_r)$ is a (generalized) eigenvector of $L_{N}$ associated with $\mu$ iff $v_r$ is (generalized) eigenvector of $L_{N_r}$ associated with $\mu$.

\vspace{0.05in}
\noindent For $\mu  = 0$, \\
\noindent (iv) $(v_1, w_2, \ldots, w_r)$ is an eigenvector of $L_{N}$ associated with the eigenvalue $0$ iff $v_1$ is an eigenvector of $L_{N_1}$, $(v_{1,n_1}, w_2)$ is an eigenvector of $L_{N_2}$ and $(w_{j-1,n_{j-1}}, w_j)$ is an eigenvector of $L_{N_j}$, $j=3, \ldots,r$ associated with the eigenvalue $0$;

\noindent (v) $(0_{{\footnotesize\R^{n_1 + \ldots + n_{i-1}-i+1}}}, v_i, w_{i+1}, \ldots, w_r)$, with $i=2, \ldots, r-1$, is an eigenvector of $L_{N}$ associated with the eigenvalue $0$ iff $v_i$ is an eigenvector of $L_{N_i}$ associated with the eigenvalue $0$ and such that $v_{i,1}=0$  and $(w_{j-1,n_{j-1}}, w_j)$ is an eigenvector of $L_{N_j}$ associated with the eigenvalue $0$, $j=i+1, \ldots,r$;

\noindent (vi) $(0_{{\footnotesize\R^{n_1 + \ldots + n_{r-1}-r+1}}}, v_r)$ is an eigenvector of $L_{N}$ associated with the eigenvalue $0$ iff $v_r$ is an eigenvector of $L_{N_r}$ associated with the eigenvalue $0$ and such that $v_{r,1}=0$.
\end{prop}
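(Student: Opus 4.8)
The plan is to argue by induction on the number $r$ of component networks, using the two-factor results (Propositions~\ref{prop:eig_val} and~\ref{prop:eig_vec}) as the engine, exactly along the recursive decomposition $N = M_{r-1} \circ N_r$ indicated in the text, where $M_{r-1} = N_1 \circ \cdots \circ N_{r-1}$ is itself a FFCN to which the induction hypothesis applies, coalesced with $N_r$ by merging the last cell of $M_{r-1}$ (the coalescence cell of $N_{r-1}$) with the first cell of $N_r$. The base case $r=1$ is immediate, and $r=2$ recovers precisely Propositions~\ref{prop:eig_val} and~\ref{prop:eig_vec}; so one may equally take $r=2$ as the anchor of the induction.

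For part [1], I would apply Proposition~\ref{prop:eig_val} to the decomposition $N = M_{r-1} \circ N_r$, obtaining $m_a(\mu) = m_a^{M_{r-1}}(\mu) + m_a^r(\mu)$ for $\mu \neq 0$ and $m_a(0) = m_a^{M_{r-1}}(0) + m_a^r(0) - 1$, and then substitute the induction hypothesis for $M_{r-1}$, namely $m_a^{M_{r-1}}(0) = \sum_{i=1}^{r-1} m_a^i(0) - (r-1) + 1$ and $m_a^{M_{r-1}}(\mu) = \sum_{i=1}^{r-1} m_a^i(\mu)$. The telescoping of the constant terms then yields $m_a(0) = \sum_{i=1}^r m_a^i(0) - r + 1$ and $m_a(\mu) = \sum_{i=1}^r m_a^i(\mu)$; this step is routine arithmetic.

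For part [2], I would again use the top-level decomposition $N = M_{r-1}\circ N_r$ and apply Proposition~\ref{prop:eig_vec} to split any (generalized) eigenvector of $L_N$ into a block on the $M_{r-1}$-coordinates and a block on the $N_r\setminus\{c\}$-coordinates. In the case $\mu \neq 0$, parts (iii)--(iv) of Proposition~\ref{prop:eig_vec} show that a nonzero $M_{r-1}$-block is a (generalized) eigenvector of $L_{M_{r-1}}$, while vectors supported entirely on $N_r\setminus\{c\}$ correspond to (generalized) eigenvectors of $L_{N_r}$; feeding the $M_{r-1}$-block into the induction hypothesis resolves it into the forms listed in (i) and (ii), with (iii) being the new $N_r$ stratum. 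In the case $\mu = 0$, parts (i)--(ii) of Proposition~\ref{prop:eig_vec} supply the single compatibility condition at the top join (that the coalescence coordinate shared by $M_{r-1}$ and $N_r$ makes both restrictions kernel vectors of the respective Laplacians), and the induction hypothesis supplies the analogous chain $(w_{j-1,n_{j-1}},w_j)\in\ker L_{N_j}$ for the earlier joins together with the vanishing-first-coordinate conditions of (v)--(vi).

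The main obstacle I anticipate is purely organizational: keeping the coordinate blocks and their dimensions aligned across the recursion, since each coalescence identifies one cell, so a leading zero-block of length $n_1+\cdots+n_{i-1}-i+1$ must be matched against the actual cell count of $M_{i-1}$ and its merged coordinate, and for $\mu=0$ one must verify that the per-join compatibility conditions accumulate without interference. A secondary point worth stating explicitly is that, as in Remark~\ref{rmk:geneig}, an eigenvector of a factor Laplacian may lift only to a genuine generalized eigenvector of $L_N$; this is exactly why the $\mu\neq 0$ statements are phrased in terms of generalized eigenvectors, and the induction closes because Proposition~\ref{prop:eig_vec} is itself stated at the level of generalized eigenvectors.
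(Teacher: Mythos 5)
Your proposal is correct and follows exactly the paper's own (sketched) argument: the paper proves this proposition simply by observing that $N = M_{r-1} \circ N_r$, with $M_{r-1} = N_1 \circ \cdots \circ N_{r-1}$, and recursively applying Propositions~\ref{prop:eig_val} and~\ref{prop:eig_vec}, which is precisely your induction on $r$. Your write-up in fact supplies more detail (the explicit telescoping of multiplicities and the stratification of eigenvector supports) than the paper, which leaves the recursion implicit.
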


\subsection{The bifurcation coalescence problem for FFCNs}

Now, we focus on the steady-state bifurcations of diffusive admissible systems of feedforward coalescence networks $N=N_1\circ N_2$.
Recall that the bifurcation conditions are given by $g_0 + \mu {h_1}=0$, where $\mu$ is an eigenvalue of the Laplacian matrix $L_N$. 
Moreover, Proposition~\ref{prop:eig_val} asserts that the bifurcation conditions for $N=N_1\circ N_2$ are also bifurcation conditions for $N_1$ and/or $N_2$.

We start by considering the case where the bifurcation condition holds for exactly one of the component networks.
When the bifurcation condition holds for the network $N_1$ ($N_2$) and does not hold for the network $N_2$ ($N_1$), 
we have a unique correspondence between the bifurcation branches for the network $N_1$ ($N_2$)  and the bifurcation branches for $N$.

\begin{prop}\label{Prop:bif_cond_one}
Let $N = N_1 \circ N_2$ be a feedforward coalescence network and $F^N$ be an $N$-admissible diffusive system.
Given a bifurcation condition for $F^{N}$ that is a bifurcation condition for $F^{N_1}$, but not for $F^{N_2}$, each bifurcation branch of $F^{N_1}$ corresponds to a unique bifurcation branch of $F^{N}$, and vice-versa.
The analogue is valid if it is a bifurcation condition for $F^{N_2}$, but not for $F^{N_1}$.
\end{prop}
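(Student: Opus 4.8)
My strategy is to turn the bifurcation problem for $F^N$ into a triangular system in which the cells of $N_1$ form a \emph{closed} subsystem identical to $F^{N_1}$, while the remaining cells of $N_2$ are slaved to the coalescence coordinate through an invertible linear part; the Implicit Function Theorem then produces the claimed one-to-one extension. I first record that the bifurcation condition $g_0+\mu h_1=0$ must correspond to an eigenvalue $\mu\neq 0$. Indeed, every Laplacian $L_{N_i}$ has row sums zero, so $0$ is always one of its eigenvalues; hence if $\mu=0$ the condition would hold for both $F^{N_1}$ and $F^{N_2}$, contradicting the hypothesis that it fails for $F^{N_2}$.

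Next I would make the feedforward decoupling explicit via Remark~\ref{rem:admcolsys}. Since $N$ is a FFCN the coalescence cell receives no edge from the other cells of $N_2$, so together with the diffusive condition $h(z,z,\lambda)=0$ one obtains $(F^{N_2})_c(x_2,x_c,\lambda)=g(x_c,\lambda)$; substituting this into the formula for $(F^N)_c$ cancels the $-g(x_c,\lambda)$ term, so the equations indexed by the cells of $N_1$ (including $c$) reduce to $F^{N_1}(x_1,x_c,\lambda)$ and do not involve $x_2$. The equations indexed by the cells of $N_2\setminus\{c\}$ form a driven block $F^{N_2}_{\mathrm{rest}}(x_2,x_c,\lambda)$ whose derivative with respect to $x_2$ at the origin is $g_0\,\mathrm{id}_{n_2-1}+h_1\underline{\overline{L}}_{N_2}$. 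As the eigenvalues of $L_{N_2}$ are $0$ together with those of $\underline{\overline{L}}_{N_2}$, the hypothesis that $\mu\neq 0$ is not an eigenvalue of $L_{N_2}$ guarantees that $\mu=-g_0/h_1$ is not an eigenvalue of $\underline{\overline{L}}_{N_2}$, so this block is invertible.

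With invertibility in hand the Implicit Function Theorem does the work. Given any bifurcation branch $(b_1,b_c)$ of $F^{N_1}$, I solve $F^{N_2}_{\mathrm{rest}}(x_2,b_c(\lambda),\lambda)=0$ for the unique $b_2$ with $b_2(0)=0$, obtaining an equilibrium branch $(b_1,b_c,b_2)$ of $F^N$ which is nontrivial because its $N_1$-part is. Conversely, any bifurcation branch of $F^N$ restricts on the $N_1$-coordinates to an equilibrium branch of $F^{N_1}$, and it must be nontrivial: if $(b_1,b_c)\equiv 0$ then the invertible driven block forces $b_2\equiv 0$ by the uniqueness clause of the theorem, collapsing the whole branch to the trivial one. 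These two assignments are mutually inverse by the same uniqueness, giving the bijection. For the analogue where the condition holds for $F^{N_2}$ but not $F^{N_1}$, the mechanism is different but equally direct: now the closed $N_1$-subsystem has invertible linearization, so its only branch near the origin is trivial, which forces $b_1\equiv 0$ and $b_c\equiv 0$; the problem then reduces to $F^{N_2}_{\mathrm{rest}}(x_2,0,\lambda)=0$, whose branches are precisely the bifurcation branches of $F^{N_2}$ (whose $c$-coordinate already vanishes because $g_0\neq 0$). I expect the only delicate point to be the bookkeeping that extension and restriction are genuinely inverse and that nontriviality is preserved in both directions; once the invertibility of the driven $N_2$-block is secured, the rest is a routine application of the Implicit Function Theorem.
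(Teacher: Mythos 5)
Your proof is correct and follows essentially the same route as the paper's: the feedforward structure plus diffusivity splits $F^N=0$ into the closed subsystem $F^{N_1}=0$ and a driven block over $N_2\setminus\{c\}$ whose linearization $g_0\,\mathrm{id}+h_1\underline{\overline{L}}_{N_2}$ is invertible precisely because the bifurcation condition fails for $N_2$, so the Implicit Function Theorem gives the unique extension, and the reverse case reduces to the trivial branch on $N_1$ with the $c$-coordinate pinned at zero. Your preliminary observation that the shared eigenvalue must be nonzero (hence $g_0\neq 0$) and the explicit check that extension and restriction are mutually inverse are details the paper leaves implicit, but the argument is the same.
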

\begin{proof}
It follows from the feedforward structure of $N$, see Remark~\ref{rem:admcolsys}, and the fact of the admissible system being diffusive that 
$$F^N(x_1,x_c,x_2,\lambda)=0 \iff F^{N_1}(x_1,x_c,\lambda)=0 \wedge F^{\overline{N_2}}(x_c,x_2,\lambda)=0,$$
where $\overline{N_2}$ is the network $N_2$ without the cell $c$.

Suppose that we have a bifurcation condition for $N_1$ that is not a bifurcation condition for $N_2$. 
Then the solutions of $F^{N_1}(x_1,x_c,\lambda)=0$ are given by the bifurcation branches of $N_1$.
Consider $(b_1,b_c)$ a bifurcation branch of $N_1$. 
Now, we need to solve $f^{\overline{N_2}}(b_c(\lambda),x_2,\lambda)=0$. 
Since the bifurcation condition does not hold for $N_2$, it does not hold for $\overline{N_2}$ and there is a unique solution to $f^{\overline{N_2}}(b_c(\lambda),x_2,\lambda)=0\iff x_2=b_2(\lambda)$.
Thus, there exists a unique correspondence between bifurcation branches of $N_1$ and the bifurcation branches of $N$.  
 
Now, consider the case where the bifurcation condition for $N_2$ is not a bifurcation condition for $N_1$
and note that the unique equilibrium for $N_1$ in a neighbourhood of the origin is the origin.
Since $N$ is a FFCN, the merging cell does not receive inputs from the other cells in $N_2$. Let $g_0$ be the eigenvalue of the Jacobian matrix of $N_2$ associated to the merging cell.
If the merging cell bifurcates then the bifurcation condition must be given by ${g_0}=0$ which is also a bifurcation condition for $N_1$.
Thus, given the assumption that the bifurcation condition holds for $N_2$ and does not hold for $N_1$, we conclude that the merging cell does not bifurcate.
We have then that the bifurcation branches in $N$ are given by the coalescence of the zero equilibrium branch in $N_1$ and the bifurcation branches in $N_2$.  
\end{proof}

From now on we assume that the bifurcation condition holds for both component networks $N_1$ and $N_2$. 
For FFCNs, contrary to most coalescence networks,
there are bifurcation branches for the feedforward coalescence network $N_1 \circ N_2$
that are `the coalescence' of bifurcation branches for the networks $N_1$ and $N_2$.
In fact, given $(b_1,b^1_c)$ and $(b^2_c,b_2)$ equilibrium branches of $F^{N_1}$ and $F^{N_2}$ such that $b^1_c\equiv b^2_c$, the coalescence of these branches is an equilibrium branch of $F^N$.
For $d\neq c$ and $d\in N_i$, we have
$$(F^N)_d(b_1(\lambda),b_c(\lambda),b_2(\lambda),\lambda)= (F^{N_i})_d(b_1(\lambda),b_c(\lambda),\lambda)=0$$
and 
$$(F^N)_d(b_1(\lambda),b_c(\lambda),b_2(\lambda),\lambda)= (F^{N_1})_d(b_1(\lambda),b_c(\lambda),\lambda)+(F^{N_2})_c(b_2(\lambda),b_c(\lambda),\lambda)-g(b_c(\lambda),\lambda)=0.$$
since $g(b_c(\lambda),\lambda)=(F^{N_2})_c(b_2(\lambda),b_c(\lambda),\lambda)$.

However, as we will see, usually the bifurcation branches for $N_1$ and $N_2$ do not need to agree in the $c$-coordinate and, in that case, there are no bifurcation branches  for $N_1 \circ N_2$ given by the coalescence of branches for $N_1$ and $N_2$.
Moreover, as we are going to show, apart from some specific cases, there are bifurcation branches for $N_1\circ N_2$ that are not obtained by the coalescence of branches of $N_1$ and $N_2$.
However, in the next example we present a particular case where every bifurcation branch for $N_1\circ N_2$ is given by the coalescence of bifurcation branches for $N_1$ and $N_2$.

\begin{exam}
Consider the networks $N_1$, $N_2$  and $N= N_1 \circ N_2$ in Example~\ref{exam:ex_ma(0)>1} and Figure~\ref{Fig:ex_ma(0)>1} and consider the bifurcation condition given by ${g_0}+2{h_2}=0$.
Since cell $c=3$ does not receive any input from the other cells the dynamics for this cell is `isolated'.  And the dynamics for this cell is hyperbolic.
Thus the $c$-coordinate in the center manifold is always equal to zero. So, the bifurcation branches for $N$ can be obtained from the bifurcation branches for $N_1$ and $N_2$.

Knowing that the bifurcation given by ${g_0}+2{h_2}=0$ is transcritical in $N_1$ (and $N_2$) and denoting by $(0,0, 0)$ and $(0,b(\lambda), 0)$ the equilibrium branches in $N_1$ (and $N_2$).
We obtain the bifurcation branches of $N$ which are the following: 
\[(0,b(\lambda),0, 0, 0), (0,0, 0,b(\lambda), 0), (0,b(\lambda), 0,b(\lambda), 0).\qedhere\]
\end{exam}

As we saw in the proof of Proposition~\ref{Prop:bif_cond_one}, the restriction of a bifurcation branch for $N$ to the first $n_1$ cells must be a bifurcation branch for $N_1$.
Given a bifurcation branch, $b=(b_1,b_c):D_1\rightarrow \RR^{n_1-1}\times \RR$, for $N_1$, we need to solve the following equation to find the bifurcation branches for $N$
$$F^{\overline{N}_2}(b_c(\lambda),x_2,\dots,x_{n_2},\lambda)=0.$$
Let $\phi:\RR^{n_2-1}\times \RR\rightarrow\RR^{n_2-1}$ given by $$\phi(x_2,\dots,x_{n_2},\lambda)= F^{\overline{N}_2}(b_c(\lambda),x_2,\dots,x_{n_2},\lambda).$$ 
The derivative of $\phi$ with respect to the variables $x_2,\dots,x_{n_2}$ at $x_c=x_2=\dots=x_{n_2}=0$ and $\lambda=0$ is equal to the submatrix $\underline{\overline{J}}_F^{N_2}$ of $J_F^{N_2}$ by removing the $c$-column and $c$-row, $$D_{x}\phi=\underline{\overline{J}}_F^{N_2}.$$


The derivative of $\phi$ with respect to the variable $\lambda$ at $x_c=x_2=\dots=x_{n_2}=0$ and $\lambda=0$ is equal to 
$$D_{\lambda}\phi= (b_c)'(0)[J_F^{N_2}]^c+[(f_d)_{\lambda}]_{\substack{d\neq c,\\ d\in C_2}},$$
where $[J_F^{N_2}]^c$ is the $c$-column of $J_F^{N_2}$ without the $c$-row, and $(b_c)'(0)$ and $(f_d)_{\lambda}$ denote the derivatives of the (respective) functions in order to $\lambda$ at the origin.
Since $(f_d)_{\lambda}(0,\dots,0,\lambda)=0$ for every cell $d$, the previous derivative of $\phi$ is 
$$D_{\lambda}\phi= (b_c)'(0)[J_F^{N_2}]^c.$$

Since the bifurcation condition is shared by the two networks $N_1$ and $N_2$, then $D_{x}\phi=\underline{\overline{J}}_F^{N_2}$ is not invertible and we have
$$rank(D_{x\lambda}\phi)=\begin{cases} rank(D_{x}\phi)\quad&, \textrm{ if }  (b_c)'(0)[J_F^{N_2}]^c\in Im(\underline{\overline{J}}_F^{N_2})\\
rank(D_{x}\phi)+1&, \textrm{ if }  (b_c)'(0)[J_F^{N_2}]^c\notin Im(\underline{\overline{J}}_F^{N_2})
\end{cases}$$

We start with the simplest case, when $rank(D_{x}\phi)=n_2-2$ and $(b_c)'(0)[J_F^{N_2}]^c\notin Im(\underline{\overline{J}}_F^{N_2})$. 
In this case, there exists a cell $d$ such that the submatrix of $D_{x\lambda}\phi$, obtained by removing the $d$-column, is invertible.
By reordering the cells suppose that $d$ is the second cell in $N_2$. 
By the implicit function theorem, we know that there exists a function $z:\RR\rightarrow \RR^{n_2-2}\times \RR$ with $z(x_2)=(z_{x}(x_2),z_{\lambda}(x_2))$ such that 
$$\phi(x_2,x_3,\dots,x_{n_2},\lambda)=0\iff (x_3,\dots,x_{n_2})= z_x(x_2)\wedge \lambda=z_\lambda(x_2).$$
Then, we obtain the `branch' on the coalescence network
$$F^{N}(b_1(z_\lambda(x_2)),b_c(z_\lambda(x_2)),x_2,z_x(x_2),z_{\lambda}(x_2))=0.$$
Differentiating the function $F^{N_2}(b_c(z_\lambda(x_2)),x_2,z_x(x_2),z_{\lambda}(x_2))$ with respect to $x_2$ at $x_2=0$, we get  
$$\underline{\overline{J}}_F^{N_2}\begin{bmatrix}1\\ z'_x(0)\end{bmatrix}+(b_c)'(0) z'_{\lambda}(0) [J_F^{N_2}]^c =0.$$ 
Since $(b_c)'(0)[J_F^{N_2}]^c\notin Im(\underline{\overline{J}}_F^{N_2})$, we know that $z'_{\lambda}(0)$ must be $0$. 
The vector $(1,z_x'(0))$ is know since it is an eigenvector of $\underline{\overline{J}}_F^{N_2}$ associated with the bifurcation condition $g_x+\mu h_1=0$ and it is also an eigenvector of $\underline{\overline{L}}_F^{N_2}$ associated with $\mu$. 
Differentiating twice the function $F^{N_2}(b_c(z_\lambda(x_2)),x_2,z_x(x_2),z_{\lambda}(x_2))$ with respect to $x_2$ at $x_2=0$, we have that
$$\dfrac{\partial^2 f_j^{N_2}}{\partial x_2^2}= \dfrac{\partial f_j}{\partial x_c}b'_c z''_{\lambda}+ \dfrac{\partial^2 f_j}{\partial x_2^2}+ 2 \sum_{i\geq 3}\dfrac{\partial^2 f_j}{\partial x_2\partial x_i}z'_{x_i}+ \sum_{i,k\geq 3} \dfrac{\partial^2 f_j}{\partial x_i\partial x_k}z'_{x_i}z'_{x_k}+ \sum_{i\geq 3} \dfrac{\partial f_j}{\partial x_i}z''_{x_i}=0,$$
for $j\in C_2\setminus\{c\}$ since $z'_{\lambda}=0$ and $(f_d)_{\lambda}=0$. 
The second derivative can be rewritten as 
$$\dfrac{\partial^2 F^{N_2}}{\partial x_2^2}= z''_{\lambda}(0) (b_c)'(0)[J_F^{N_2}]^c+ \underline{\overline{J}}_F^{N_2}\begin{bmatrix}0\\ z''_x(0)\end{bmatrix} 
+ \left[\begin{bmatrix}1 & z'_x(0)\end{bmatrix} \underline{\overline{H}}^{N_2}_{f_j} \begin{bmatrix}1 \\ z'_x(0)\end{bmatrix} \right]_{j\in C_2\setminus\{c\}} ,$$
where $\underline{\overline{H}}^{N_2}_{f_j}$ is the submatrix obtained from the Hessian matrix of $f_j^{N_2}$ by removing the $c$-column and $c$-row. 
Denote by $H$ the column matrix given by the third term of the right side of the equality above. 
The vector $H$ is known since $(1,z'_x(0))$ is an eigenvector of $\underline{\overline{L}}_F^{N_2}$ associated with $\mu$.
 And we have the equation 
$$z''_{\lambda}(0) (b_c)'(0)[J_F^{N_2}]^c+ \underline{\overline{J}}_F^{N_2}\begin{bmatrix}0\\ z''_x(0)\end{bmatrix} 
+ H=0.$$
The previous equation can be solved in order to  $z''_{x}(0)$ and $z''_{\lambda}(0)$ because we can split the space $\RR^{n_2-1}= Im(\underline{\overline{J}}_F^{N_2})\oplus <(b_c)'(0)[J_F^{N_2}]^c>$.  
Moreover, 
$$z''_{\lambda}(0)\neq 0 \iff H\notin Im(\underline{\overline{J}}_F^{N_2}).$$
Since $z''_{\lambda}(0)\neq 0$, we can rewrite the branch $z$ as two bifurcation branches as functions of the parameter $\lambda$. 
These two bifurcation branches are defined exclusively for positive or negative values of $\lambda$, depending whether $z''_{\lambda}(0)$ is positive or negative, respectively. 
Furthermore, the two bifurcation branches have a growth-rate of order $\sqrt{\lambda}$ for the cells in $N_2$ where the eigenvector $(1,z_x'(0))$ does not vanish.

 Note that $[J_F^{N_2}]^c=h_1 L_{N_2}^c$ and we can assume generically that $h_1\neq 0$, because the bifurcation condition has the form $g_x+\mu h_1=0$. We have thus shown the following result.


\begin{prop}\label{prop:extbifbra}
Let $N=N_1\circ N_2$ be a feedforward coalescence network, $F^N$ be an $N$-admissible diffusive system and $(b_1,b_c):D_1\rightarrow \RR^{n_1-1}\times \RR$ be a bifurcation branch for $F^{N_1}$.
Suppose that $\underline{\overline{J}}_F^{N_2}$ has rank $n_2-2$, $(b_c)'(0) L_{N_2}^c\notin Im(\underline{\overline{J}}_F^{N_2})$ and $H\notin Im(\underline{\overline{J}}_F^{N_2})$.
Denote by $v\in\RR^{n_2-1}$ a vector in the kernel of $\underline{\overline{J}}_F^{N_2}$.

Then, generically, there exist $b^1_2,b^2_2 :D_2\rightarrow \RR^{n_2-1}$ such that $b^1=(b_1,b_c,b^1_2),b^2=(b_1,b_c,b^2_2):D_1\cap D_2\rightarrow \RR^{n_1+n_2-1}$ are bifurcation branches for $F^{N}$. 
Furthermore, for any cell $d\in N_2$ such that $v_d\neq 0$, the growth-rate of the bifurcation branches, $b^1_d$ and $b^2_d$, at cell $d$ is $\sqrt{|\lambda|}$.
\end{prop}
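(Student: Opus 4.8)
The plan is to exploit the feedforward splitting to reduce the equilibrium problem on $N$ to a scalar-parameter analysis on the second layer, and then to carry out a second-order expansion that pins down the $\sqrt{|\lambda|}$ scaling. By the feedforward structure together with diffusivity (Remark~\ref{rem:admcolsys}), the equation $F^N=0$ decouples into $F^{N_1}(x_1,x_c,\lambda)=0$ and $F^{\overline{N}_2}(x_c,x_2,\dots,x_{n_2},\lambda)=0$. Substituting the given branch $x_c=b_c(\lambda)$ reduces everything to solving $\phi(x_2,\dots,x_{n_2},\lambda)=0$, where $\phi$ is as above, with $D_x\phi=\underline{\overline{J}}_F^{N_2}$ and $D_\lambda\phi=(b_c)'(0)[J_F^{N_2}]^c=(b_c)'(0)h_1 L_{N_2}^c$. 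I would then combine the two rank hypotheses: since $\underline{\overline{J}}_F^{N_2}$ has rank $n_2-2$ and $(b_c)'(0)L_{N_2}^c\notin Im(\underline{\overline{J}}_F^{N_2})$, the full derivative $D_{x\lambda}\phi$ has rank $n_2-1$, i.e.\ full row rank, so after reordering some square submatrix obtained by deleting the $x_2$-column is invertible. The implicit function theorem yields a smooth curve $z(x_2)=(z_x(x_2),z_\lambda(x_2))$ with $z(0)=0$ solving $\phi=0$ locally, and pairing it with $x_1=b_1(z_\lambda(x_2))$, $x_c=b_c(z_\lambda(x_2))$ gives a one-parameter family of equilibria of $F^N$ through the origin.

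The heart of the argument is the low-order expansion of $z$. Differentiating $F^{N_2}(b_c(z_\lambda(x_2)),x_2,z_x(x_2),z_\lambda(x_2))\equiv 0$ once at $x_2=0$ gives $\underline{\overline{J}}_F^{N_2}(1,z_x'(0))+(b_c)'(0)z_\lambda'(0)h_1 L_{N_2}^c=0$; because $(b_c)'(0)L_{N_2}^c\notin Im(\underline{\overline{J}}_F^{N_2})$, the two summands must vanish separately, forcing $z_\lambda'(0)=0$ and identifying $v=(1,z_x'(0))$ as a kernel vector of $\underline{\overline{J}}_F^{N_2}$ (equivalently, an eigenvector of $\underline{\overline{L}}_{N_2}$ for $\mu$). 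Differentiating a second time, using $z_\lambda'(0)=0$ and $(f_d)_\lambda\equiv 0$, produces $z_\lambda''(0)(b_c)'(0)h_1 L_{N_2}^c+\underline{\overline{J}}_F^{N_2}(0,z_x''(0))+H=0$, where $H$ is the Hessian term assembled from $v$. Using the decomposition $\RR^{n_2-1}=Im(\underline{\overline{J}}_F^{N_2})\oplus\langle L_{N_2}^c\rangle$ (valid precisely by the image hypothesis), I would project onto the complement to solve for $z_\lambda''(0)$ and onto the image to solve for $z_x''(0)$, the complement projection showing $z_\lambda''(0)\neq 0$ exactly when $H\notin Im(\underline{\overline{J}}_F^{N_2})$, which is the assumed non-degeneracy.

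Finally, since $z_\lambda(0)=z_\lambda'(0)=0$ and $z_\lambda''(0)\neq 0$, the relation $\lambda=z_\lambda(x_2)=\tfrac12 z_\lambda''(0)x_2^2+O(x_2^3)$ is a nondegenerate parabola, so $\lambda$ is constrained to one side of $0$ and $x_2=\pm\sqrt{2\lambda/z_\lambda''(0)}+\cdots$ there. Reparametrizing by $\lambda$ yields two distinct, nonzero branches $b^1=(b_1,b_c,b^1_2)$ and $b^2=(b_1,b_c,b^2_2)$ on a half-neighbourhood $D_2$, with $x_2\sim\sqrt{|\lambda|}$; and for any other cell $d\in N_2$ one has $x_d=z_{x_d}(x_2)=v_d\,x_2+O(x_2^2)$, so the growth rate at $d$ is $\sqrt{|\lambda|}$ whenever $v_d\neq 0$. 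I expect the main obstacle to be the bookkeeping in the second-order step—correctly extracting $H$ from the Hessians of the $N_2$-equations and justifying the direct-sum splitting that isolates $z_\lambda''(0)$—since everything downstream (the $\pm$ square-root branches and the amplified growth rate) is a formal consequence of $z_\lambda'(0)=0\neq z_\lambda''(0)$.
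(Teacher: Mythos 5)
Your proposal is correct and follows essentially the same route as the paper's own argument: the same reduction via the feedforward/diffusive splitting to the map $\phi$, the same implicit-function-theorem parametrization by the coordinate $x_2$, the same first- and second-order differentiations yielding $z_\lambda'(0)=0$ and $z_\lambda''(0)\neq 0$ via the splitting $\RR^{n_2-1}=Im(\underline{\overline{J}}_F^{N_2})\oplus\langle (b_c)'(0)L_{N_2}^c\rangle$, and the same reparametrization of the parabola $\lambda=z_\lambda(x_2)$ to obtain the two branches with $\sqrt{|\lambda|}$ growth at cells where $v_d\neq 0$. No substantive differences or gaps.
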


Before we exemplify the previous result, we observe next that the non-degenerated condition $H\notin Im(\underline{\overline{J}}_F^{N_2})$ depends generically on the network $N_2$ that we consider. This means that, given a network $N_2$, the previous condition holds for any generically $g$ an $h$ or it fails for any $g$ and $h$.

\begin{rem}
Since $h(x,x,\lambda)=0$, the second derivatives of $h$ satisfy $h_{11}+2h_{12}+h_{22}=0$, where 
$h_{ij}$ is the second derivative of $h(x_i,x_j,\lambda)$ with respect to $x_i$ and $x_j$ at the origin.
Denote by $g_{xx}$ the second derivative of $g(x,\lambda)$ with respect to $x$ twice at the origin and by $v=(1,z'_x(0))$ the eigenvector of $L_{\overline{N}_2}$ associated with $\mu$ used before.
Then, the vector $H$ can be rewritten as
$$H=g_{xx} v*v+h_{11}(L_{\overline{N}_2}v)*v+h_{22}[W_{\overline{N}_2}v^2-(W_{\overline{N}_2}v)*v],$$
where $x*y$ is the vector given by the product of the coordinates and $W_{\overline{N}_2}v^2$ is the vector $(\sum_{i=2,i\neq j}^{n_2}w_{ji} v_i^2)_{j=2,\dots,n_2}$.  
So
$$H=(g_{xx}+\mu h_{11}) v*v+h_{22}[W_{\overline{N}_2}v^2-(W_{\overline{N}_2}v)*v].$$

And $H\notin Im(\underline{\overline{J}}_F^{N_2})$ for $g$ and $h$ generic, if and only if 
$$v*v\notin  Im(\underline{\overline{J}}_F^{N_2}) \text{ or } [W_{\overline{N}_2}v^2-(W_{\overline{N}_2}v)*v] \notin  Im(\underline{\overline{J}}_F^{N_2})$$
if and only if 
$$v*v\notin  Im(h_1(L_{\overline{N}_2}-\mu Id)) \text{ or } [W_{\overline{N}_2}v^2-(W_{\overline{N}_2}v)*v] \notin  Im(h_1(L_{\overline{N}_2}-\mu Id)).$$
Since $h_1\neq 0$, the previous conditions depend exclusively on the network structure.   
\end{rem}

In the following example we apply the previous result to the feedforward coalescence network in Example~\ref{exam:ex_gener_4}.

\begin{exam}\label{exam:ex_gener_4b}
Let $N=N_1\circ N_2$ be the feedforward coalescence network given in Example~\ref{exam:ex_gener_4} and let $F^N$ be an $N$-admissible diffusive system such that $g_x+h_1=0$ and $F^{N}(0,0,0,0,0,\lambda)=0$.
In order to study the bifurcation branches for $F^N$, we start by studying the bifurcation branches for $F^{N_1}$.
The subspace $\{(x_1,x_2,x_3)\in\RR^3: x_2=x_3\}$ is invariant by every $N_1$-admissible system $F^{N_1}$ and the bifurcation condition $g_x+h_1=0$ still holds in this subspace. 
Thus, we look for bifurcation branches in this subspace where a system $F^{N_1}$ has the following form:
$$\begin{cases}
\dot{x}_1=g(x_1,\lambda)\\
\dot{x}_2=g(x_2,\lambda)+ h(x_2,x_1,\lambda)\\  
\end{cases}.$$

Assuming that $g_x\neq 0$, we conclude that the first cell has only the equilibrium $0$ near the origin.
Then, we look for the solutions of the equation 
\begin{equation}\label{eq:ex_gener_4}
g(x_2,\lambda)+ h(x_2,0,\lambda)=0.
\end{equation}

If the second derivatives satisfy the non-degeneracy conditions 
$g_{xx}+h_{11}\neq 0$ and $g_{x\lambda}+h_{1\lambda}\neq 0$
, then there exists $b:]-\epsilon,\epsilon[\rightarrow \RR$ such that 
$$g(x_2,\lambda)+ h(x_2,0,\lambda)=0\iff x_2=0 \vee x_2= b(\lambda),$$
in a neighbourhood of the origin. 
Moreover, we know that 
$$b'(0)= -2\dfrac{g_{x\lambda}+h_{1\lambda}}{g_{xx}+h_{11}}.$$

Now, we see how the bifurcation branch for $N_1$ can be extended to $N$.
Note that $\underline{\overline{J}}_F^{N_2}$ has rank $1$ and  
$$-2\dfrac{g_{x\lambda}+h_{1\lambda}}{g_{xx}+h_{11}}\begin{bmatrix}
-h_1\\
-h_1
\end{bmatrix}\notin Im(\begin{bmatrix}
2h_1&-2h_1\\
-h_1 & h_1
\end{bmatrix}).$$
Thus there exists $z=(z_5,z_{\lambda}):]-\epsilon,\epsilon[\rightarrow \RR\times \RR$ such that 
$$F^{N_2}(b(\lambda),x_4,x_5,\lambda)=0\iff (x_5,\lambda)= z(x_4).$$
Since $(0,1,1)$ is an eigenvalue of $J_F^{N_2}$ associated with the bifurcation condition, we have that $z'_5 =1$ and we know that $z'_\lambda =0$. 

The Hessian matrix of $f_{4}^{N_2}$ and $f_{5}^{N_2}$ are given by
$$H_{f_{4}}^{N_2}=\begin{bmatrix}
h_{22}&h_{12}&0\\
h_{12}& g_{xx}+3h_{11}& 2h_{12} \\
 0 		& 2h_{12}& 2h_{22} 
\end{bmatrix}\quad \quad H_{f_{5}}^{N_2}=\begin{bmatrix}
h_{22}&		0		& h_{12}\\
0			& h_{22}& h_{12} \\
h_{12}& h_{12}& g_{xx}+2h_{11} 
\end{bmatrix}$$
and the matrix $H$ in Proposition~\ref{prop:extbifbra} is given by
$$H= \left[\begin{bmatrix}1 & 1\end{bmatrix} \underline{\overline{H}}^{N_2}_{f_j} \begin{bmatrix}1 \\ 1\end{bmatrix} \right]_{j=4,5}= \begin{bmatrix}g_{xx}+3h_{11}+4h_{12}+2h_{22} \\ g_{xx}+2h_{11}+2h_{12}+h_{22}\end{bmatrix}.$$
Since $h(x,x,\lambda)=0$, we have that $h_{11}+2h_{12}+h_{22}=0$ and
$$H=\begin{bmatrix}g_{xx}+h_{11} \\ g_{xx}+h_{11}\end{bmatrix}\notin Im(\underline{\overline{J}}_F^{N_2}).$$
Now, it follows from Proposition~\ref{prop:extbifbra} that the bifurcation branch $(0,b(\lambda),b(\lambda))$ can be extended to two bifurcation branches in the coalescence network $N$. Moreover, these bifurcation branches grow as $\sqrt{|\lambda|}$ in the cells $4$ and $5$. 

We calculate the second derivatives of the function $z$ mentioned above, we have that 
$$-2 z''_{\lambda}(0) \dfrac{g_{x\lambda}+h_{1\lambda}}{g_{xx}+h_{11}}\begin{bmatrix}h_1 \\ h_1\end{bmatrix}+ \begin{bmatrix}
2h_{1} & -2h_1\\
-h_{1} & h_{1}  
\end{bmatrix}\begin{bmatrix}0\\ z''_5(0)\end{bmatrix} 
+ \begin{bmatrix}g_{xx}+h_{11} \\ g_{xx}+h_{11}\end{bmatrix} =0$$
$$\iff
 \begin{cases}
z''_{\lambda}(0) = \dfrac{(g_{xx}+h_{11})^2}{2h_1(g_{x\lambda}+h_{1\lambda})}\\ 
z''_5(0)=0
\end{cases}.$$
Thus, generically, the second derivative of $z_{\lambda}$ does not vanish.
Moreover, the bifurcation branches in the coalescence network $N$ are defined for positive or negative values of $\lambda$, depending if $h_1(g_{x\lambda}+h_{1\lambda})$ is positive or negative.
\end{exam}

Now, we look at an example that satisfies the first two conditions in Proposition~\ref{prop:extbifbra}, that is, $\underline{\overline{J}}_F^{N_2}$ has rank $n_2-2$ and $(b_c)'(0)L_{N_2}^c\notin Im(\underline{\overline{J}}_F^{N_2})$, but does not satisfy the third condition on the vector $H$.
Despite this, we still have that each bifurcation branch of $F^{N_1}$ extends to two bifurcation branches of $F^{N}$. However, the conclusions on the growth-rates do not apply.

\begin{exam}\label{ex_gener_1_bif}
Let $N=N_1\circ N_2$ be the feedforward coalescence network given in Example~\ref{exam:ex_gener_1} and let $F^N$ be an $N$-admissible diffusive system such that $g_x+2h_1=0$ and $F^{N}(0,0,0,0,0,\lambda)=0$. 
First, we study the bifurcation branches for $F^{N_1}$. 
Assuming that $g_x\neq 0$ and $g_x+h_1\neq 0$, we conclude that the first two cells have only the equilibrium $0$ near the origin.
If the second derivatives satisfy the non-degeneracy conditions $g_{xx}+2h_{11}\neq 0$ and $g_{x\lambda}+2h_{1\lambda}\neq 0$, then there exists $b:]-\epsilon,\epsilon[\rightarrow \RR$ such that 
$$g(x_3,\lambda)+ 2h(x_3,0,\lambda)=0\iff x_3=0 \vee x_3= b(\lambda),$$
in a neighbourhood of the origin. 
Moreover, we know that 
$$b'(0)= -2\dfrac{g_{x\lambda}+2h_{1\lambda}}{g_{xx}+2h_{11}}.$$

Thus there is the bifurcation branch for $N_1$ given by $(0,0,b(\lambda))$, which we extend to the coalescence network $N$. 

Note that $\underline{\overline{J}}_F^{N_2}$ has rank $1$ and  
$$-2\dfrac{g_{x\lambda}+2h_{1\lambda}}{g_{xx}+2h_{11}}\begin{bmatrix}
-2h_x\\
-h_x
\end{bmatrix}\notin Im(\begin{bmatrix}
0&0\\
-h_x & 0
\end{bmatrix}).$$
Thus there exists $z=(z_4,z_{\lambda}):]-\epsilon,\epsilon[\rightarrow \RR\times \RR$ such that 
$$F^{N_2}(b(\lambda),x_4,x_5,\lambda)=0\iff (x_4,\lambda)= z(x_5).$$
Since $(0,0,1)$ is an eigenvector of $J_F^{N_2}$ associated with the bifurcation condition, we have that $z'_4 =0$ and we know that $z'_\lambda =0$. 

The Hessian matrix of $f_{4}^{N_2}$ and $f_{5}^{N_2}$ are given by
$$H_{f_{4}}^{N_2}=\begin{bmatrix}
2h_{22}&2h_{12}&0\\
2h_{12}& g_{xx}+2h_{11}& 0 \\
 0 		& 0 & 0 
\end{bmatrix}\quad \quad H_{f_{5}}^{N_2}=\begin{bmatrix}
h_{22}&		0		& h_{12}\\
0			& h_{22}& h_{12} \\
h_{12}& h_{12}& g_{xx}+2h_{11} 
\end{bmatrix}$$
and the matrix $H$ in Proposition~\ref{prop:extbifbra} is given by
$$H= \left[\begin{bmatrix}0 & 1\end{bmatrix} \underline{\overline{H}}^{N_2}_{f_j} \begin{bmatrix}0 \\ 1\end{bmatrix} \right]_{j=4,5}= \begin{bmatrix}0 \\ g_{xx}+2h_{11}\end{bmatrix}\in Im(\underline{\overline{J}}_F^{N_2}).$$
It follows from the calculation made before Proposition~\ref{prop:extbifbra} that $z''_\lambda =0$ and that $z''_4=-(g_{xx}+2h_{11})/h_1$.

Taking the third and fourth derivatives of $F^{N_2}(b(z_\lambda(x_5)),z_4(x_5),x_5,z_\lambda(x_5))$ at the origin which must be zero, we conclude that
$$z'''_\lambda =0, \quad z^{(4)}_\lambda= \dfrac{3(g_{xx}+2h_{11})^4}{4h_1^{3}(g_{x\lambda}+2h_{1\lambda})}\neq 0.$$

Since the fourth derivative of $z_\lambda$ does not vanishes, generically, we conclude that the bifurcation branch $(0,0,b(\lambda))$ can be extended to two bifurcation branches in the coalescence network $N$. Moreover, these bifurcation branches grow as $\sqrt[4]{|\lambda|}$ in the fifth cell and they grow as $\sqrt[2]{|\lambda|}$ in the fourth cell, since $z'_4=0$ and $z''_4\neq 0$. 
Furthermore, $z^{(4)}_\lambda$ determinates if those bifurcation branches in the coalescence network $N$ are defined for positive or negative values of $\lambda$.

There is also the trivial equilibrium branch in $N_1$ which extends to the trivial equilibrium branch in $N$ and the bifurcation branch $(0,0,0,0,0,b(\lambda))$.
\end{exam}

Next, we consider that $\underline{\overline{J}}_F^{N_2}$ has rank $n_2-2$ but $L_{N_2}^c\in Im(\underline{\overline{J}}_F^{N_2})$. 
In this case, we know that $rank(D_{x\lambda}\phi)=rank(D_{x}\phi)$ and we need to study a different bifurcation problem.
This new problem concerns replacing the input from cell $c$ with the bifurcation branch $b_c(\lambda)$ in the network $N_2$ .

Consider the function $\phi:\RR^{n_2-1}\times \RR\rightarrow\RR^{n_2-1}$ given by $\phi(x_2,\dots,x_{n_2},\lambda)= \overline{F}^{N_2}(b_c(\lambda),x_2,\dots,x_{n_2},\lambda)$.
Applying the Lyapunov-Schmidt Reduction, we know that there exists a function $W:\RR\times\RR\rightarrow \Im(\underline{\overline{J}}_F^{N_2})$  such that 
$$\phi(x,\lambda)=0 \iff x=yv+W(y ,\lambda) \wedge \langle v^*,\phi(yv+W(y ,\lambda),\lambda)\rangle =0,$$  
where $v\in Ker(\underline{\overline{J}}_F^{N_2})$, $v^*\in \Im(\underline{\overline{J}}_F^{N_2})^{\bot}$ and $y,\lambda\in\RR$ such that $\|v\|=\|v^*\|=1$. 

The first derivatives of $\psi(y,\lambda)= \langle v^*,\phi(yv+W(y ,\lambda),\lambda)\rangle$ at the origin vanish 
$$\psi_y=0,\quad \quad \psi_{\lambda}=\langle v^*, (b_c)'(0)[J_F^{N_2}]^c\rangle=0.$$

For any $k\in\mathbb{N}$, the derivative of $\phi$ with respect to $\lambda$, $k$ times, is equal to 
\begin{equation}\label{eq:derphilam}
\phi_{\lambda^k}= \sum_{j=0}^{k}\sum_{i=0}^{k-j}\sum_{\substack{
l_1,\dots,l_i\geq 0\\
j+l_1+\dots+l_i=k
}} \alpha_{j,i,l_1,\dots,l_i} (b_c)^{(l_1)}(0)\dots(b_c)^{(l_i)}(0) \overline{F}^{N_2}_{\lambda^j x_c^{i}},
\end{equation}
 for some constants $\alpha_{j,i,l_1,\dots,l_i}\in\mathbb{N}$, $(b_c)^{(l)}(0)\in\RR$ the $l$-derivative of $b_c$ at zero and $\overline{F}^{N_2}_{\lambda^j x_c^{i}}$ the derivative of $\overline{F}^{N_2}$ with respect to $x_c$ $i$-times and to $\lambda$ $j$-times.
Note that $\overline{F}^{N_2}_{\lambda^j}=0$ and 
$$\overline{F}^{N_2}_{\lambda^j x_c^{i}}= h_{x^{i}\lambda^{j}}L_{N_2}^c,$$
where $h_{x^{i}\lambda^{j}}$ is the derivative of $h(x,y,\lambda)$ with respect to $x$ $i$-times and to $\lambda$ $j$-times.
Since $L_{N_2}^c\in Im(\underline{\overline{J}}_F^{N_2})$ and $v^*\in \Im(\underline{\overline{J}}_F^{N_2})^{\bot}$, we have that
$$\psi_{\lambda^k}=\langle v^*, \phi_{\lambda^k} \rangle=0.$$ 

The second derivatives of $\phi$ at the origin are
$$\psi_{yy}=\langle v^*, H \rangle,$$ 
with $H$ the vector defined above, and given by $D^2\phi(v,v)=\left[v^{T} \underline{\overline{H}}^{N_2}_{f_j} v \right]_{j\in C_2\setminus\{c\}}$, and
$$\psi_{y\lambda}=\langle v^*,D^2\phi(v,-(D_x\phi)^{-1}(b_c)'(0)[J_F^{N_2}]^c)\rangle + \langle v^*, D_{x}\phi_{\lambda} v \rangle,$$
where $D^2\phi(v,w)=\left[v^{T} \underline{\overline{H}}^{N_2}_{f_j} w \right]_{j\in C_2\setminus\{c\}}$ and $D_{x}\phi_{\lambda} v$ is the derivative with respect one time to variables $x_2,\dots, x_{n_2}$ and one time to $\lambda$.

Note that $D_{x}\phi_{\lambda}= \underline{\overline{J}}_{f_{\lambda}}^{N_2}= g_{x\lambda}Id+ h_{x\lambda}\underline{\overline{L}}_{N_2}$ and
$$\psi_{y\lambda}=\langle v^*,D^2\phi(v,-(D_x\phi)^{-1}(b_c)'(0)[J_F^{N_2}]^c)\rangle + (g_{x\lambda}+\mu h_{x\lambda})\langle v^*, v \rangle.$$
Moreover, $\langle v^*, v \rangle\neq 0$ if and only if $ v \notin \Im(\underline{\overline{J}}_F^{N_2})$ if and only if the eigenvalue $\mu$ is simple in $\underline{\overline{L}}_{N_2}$.
In this case, generically, $\psi_{y\lambda}\neq 0$, because the derivative $g_{x\lambda}$ only appears in the second inner product of $\psi_{y\lambda}$.

We can, thus, state the following result.

\begin{prop}\label{prop:extbif2}
Let $N=N_1\circ N_2$ be a feedforward coalescence network, $F^N$ an $N$-admissible diffusive system and $\mu$ an eigenvalue of both $L_{N_1}$ and $\underline{\overline{L}}_{N_2}$.
Assume that $\mu$ is simple for $\underline{\overline{L}}_{N_2}$, that $(b_c)'(0)L_{N_2}^c\in Im(\underline{\overline{J}}_F^{N_2})$ and that $H\notin Im(\underline{\overline{J}}_F^{N_2})$.
If  $(b_1,b_c):D_1\rightarrow \RR^{n_1-1}\times \RR$ is a bifurcation branch for $F^{N_1}$ then, generically, there exist $b^1_2, b^2_2: D_1\rightarrow \RR^{n_2-1}$ such that $b^1=(b_1,b_c,b^1_2), b^2=(b_1,b_c,b^2_2):D_1\rightarrow \RR^{n_1+n_2-1}$ are bifurcation branches for $F^{N}$. 

Moreover, supposing that the bifurcation branch $(b_1,b_c)$ has linear growth and $(b_c)'(0)\neq 0$, then, generically, the extended bifurcation branches also have linear growth.
\end{prop}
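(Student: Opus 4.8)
The plan is to use the feedforward and diffusive structure to turn the extension problem into a single scalar equation, and then to recognise a transcritical normal form. First I would recall, as in the proof of Proposition~\ref{Prop:bif_cond_one}, that for a FFCN a diffusive admissible system satisfies $F^N(x_1,x_c,x_2,\lambda)=0$ iff $F^{N_1}(x_1,x_c,\lambda)=0$ and $\overline{F}^{N_2}(x_c,x_2,\lambda)=0$; hence, with the branch $(b_1,b_c)$ of $F^{N_1}$ fixed, the extensions to $F^N$ are exactly the zeros of $\phi(x,\lambda)=\overline{F}^{N_2}(b_c(\lambda),x,\lambda)$. Because $\mu$ is simple for $\underline{\overline{L}}_{N_2}$, the linearisation $D_x\phi(0,0)=\underline{\overline{J}}_F^{N_2}$ has a one-dimensional kernel $\langle v\rangle$, so I apply the Lyapunov--Schmidt reduction already set up before the statement: write $x=yv+W(y,\lambda)$ with $W$ valued in $\Im(\underline{\overline{J}}_F^{N_2})$ and pass to the scalar reduced equation $\psi(y,\lambda)=\langle v^*,\phi(yv+W(y,\lambda),\lambda)\rangle=0$, where $v^*$ spans $\Im(\underline{\overline{J}}_F^{N_2})^{\bot}$.

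Next I would collect the $2$-jet of $\psi$ at the origin from the computation preceding the statement. The first derivatives vanish, and more is true: since every pure $\lambda$-derivative $\phi_{\lambda^k}$ is, by (\ref{eq:derphilam}) and diffusivity, a multiple of $L_{N_2}^c\in\Im(\underline{\overline{J}}_F^{N_2})$, each $\psi_{\lambda^k}(0,0)=\langle v^*,\phi_{\lambda^k}\rangle$ vanishes; in particular $\psi_{\lambda\lambda}(0,0)=0$. The quadratic coefficient $\psi_{yy}(0,0)=\langle v^*,H\rangle$ is nonzero exactly by the hypothesis $H\notin\Im(\underline{\overline{J}}_F^{N_2})$. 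Finally $\psi_{y\lambda}(0,0)$ contains the term $(g_{x\lambda}+\mu h_{x\lambda})\langle v^*,v\rangle$, and here I use that simplicity of $\mu$ forces $v\notin\Im(\underline{\overline{J}}_F^{N_2})$, so $\langle v^*,v\rangle\neq0$; as $g_{x\lambda}$ enters only this term, $\psi_{y\lambda}(0,0)\neq0$ for generic internal dynamics.

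With $\psi(0,0)=\psi_y(0,0)=\psi_\lambda(0,0)=0$, $\psi_{\lambda\lambda}(0,0)=0$, $\psi_{yy}(0,0)\neq0$ and $\psi_{y\lambda}(0,0)\neq0$, the quadratic part of $\psi$ equals $y\bigl(\tfrac12\psi_{yy}\,y+\psi_{y\lambda}\,\lambda\bigr)$, a product of two distinct real linear forms. By the standard recognition of a transcritical zero (the bordered Hessian has negative determinant $-\psi_{y\lambda}^2<0$), the solution set of $\psi=0$ is locally the union of two smooth curves crossing transversally at the origin, tangent to $y=0$ and to $\tfrac12\psi_{yy}\,y+\psi_{y\lambda}\,\lambda=0$. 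Reinserting each curve into $x=yv+W(y,\lambda)$ yields the two $N_2$-components $b^1_2,b^2_2$; since $(b_1,b_c)\not\equiv0$, the assembled $b^1=(b_1,b_c,b^1_2)$ and $b^2=(b_1,b_c,b^2_2)$ are genuine bifurcation branches of $F^N$.

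For the growth rates I would differentiate the two curves at $\lambda=0$, assuming $(b_c)'(0)\neq0$. Differentiating $P\phi(W(0,\lambda),\lambda)=0$ gives $\underline{\overline{J}}_F^{N_2}W_\lambda(0,0)=-(b_c)'(0)h_1L_{N_2}^c$, which is nonzero (generically $h_1\neq0$ and $L_{N_2}^c\neq0$), while differentiating $P\phi(yv+W,\lambda)=0$ in $y$ gives $W_y(0,0)=0$. On the branch tangent to $y=0$ one has $y=O(\lambda^2)$, so its $N_2$-part is $W(0,\lambda)+O(\lambda^2)$ with nonzero derivative $W_\lambda(0,0)$; on the transversal branch $y$ is linear in $\lambda$ with slope $-2\psi_{y\lambda}/\psi_{yy}\neq0$, so the tangent vector $y'(0)v+W_\lambda(0,0)$ is nonzero because $v\notin\Im(\underline{\overline{J}}_F^{N_2})$ while $W_\lambda(0,0)\in\Im(\underline{\overline{J}}_F^{N_2})$. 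Hence both extensions grow linearly. I expect the main obstacle to be the genericity step $\psi_{y\lambda}(0,0)\neq0$: one must be sure the network structure does not force this coefficient to vanish, which is precisely where simplicity of $\mu$ (giving $\langle v^*,v\rangle\neq0$) and the independence of the parameter $g_{x\lambda}$ are essential.
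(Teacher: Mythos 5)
Your proposal is correct and follows essentially the same route as the paper: the same Lyapunov--Schmidt reduction to the scalar equation $\psi(y,\lambda)=0$, the same jet computations ($\psi_y=0$, $\psi_{\lambda^k}=0$, $\psi_{yy}=\langle v^*,H\rangle\neq 0$, and $\psi_{y\lambda}\neq 0$ generically via simplicity of $\mu$ and the free parameter $g_{x\lambda}$), and the same linear-growth analysis of the two extensions $W(0,\lambda)$ and $y(\lambda)v+W(y(\lambda),\lambda)$. The only cosmetic difference is the recognition step: you identify the transcritical crossing via the bordered Hessian/Morse lemma, whereas the paper factors $y$ out of the Taylor expansion and applies the implicit function theorem to the remaining factor; the two are interchangeable here (your version even sidesteps the implicit factorization $\psi(0,\lambda)\equiv 0$ that the paper's division by $y$ presupposes).
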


\begin{proof}
It follows from the discussion above that we need to study the equation $\psi(y,\lambda)=0$. We also know that, generically, 
$$\psi_y=0,\quad \psi_{\lambda^k}=0, \quad \psi_{yy}\neq 0 \textrm{ and } \psi_{y\lambda}\neq 0.$$

Taylor expanding the equation, we have
$$\frac{\psi_{yy}}{2}y^2+\psi_{y\lambda} y\lambda + y\mathcal{O}(2)=0\iff y=0 \vee \frac{\psi_{yy}}{2}y+\psi_{y\lambda} \lambda + \mathcal{O}(2)=0$$
Since $\psi_{yy}\neq 0$, the implicit function theorem implies that there exists $y:D_1\rightarrow \RR$ such that 
$$\frac{\psi_{yy}}{2}y(\lambda)+\psi_{y\lambda} \lambda + \mathcal{O}(2)=0.$$

Taking $b^1_2(\lambda)= W(0,\lambda)$ and $b^2_2(\lambda)= y(\lambda)v+ W(y(\lambda),\lambda)$, we obtain the following bifurcation branches of $F^{N}$:  $b^1=(b_1,b_c,b^1_2)$ and $b^2=(b_1,b_c,b^2_2)$. 
Differentiating the last equation, we see that, generically, $y'(0)\neq 0$ and the extension of bifurcation branch $b^2_2$ has a linear growth.
The derivative of $W(0,\lambda)$ is equal to $-b'_c(0)h_1(\underline{\overline{J}}_F^{N_2})^{-1}L_{N_2}^c$ which does not vanishes if $(b_c)'(0)\neq 0$.
Therefore, if the bifurcation branch $(b_1,b_c)$ for $N_1$ has linear growth, then the extended bifurcation branches for $N$ preserve the linear growth. 
\end{proof}

In the next example, we apply the result presented in Proposition~\ref{prop:extbif2}.

\begin{exam}
Consider the feedforward coalescence network $N= N_1\circ N_2$ given in Figure~\ref{Fig:ex_gener_5} and an $N$-admissible diffusive system $F^N$ such that $g_x+3h_1=0$ and $F^{N}(0,0,0,0,0,\lambda)=0$.
Generically, a transcritical bifurcation occurs in the network $N_1$. Thus, we have a bifurcation branch $b=(b_1,b_2)$ for $F^{N_1}$. 
We consider this branch and use the previous result to extend it to two bifurcation branches in $N$.

We need to check that $3$ is a simple eigenvalue of $\underline{\overline{L}}_{N_2}$, that $L_{N_2}^c\in Im(\underline{\overline{J}}_F^{N_2})$ and that $H\notin Im(\underline{\overline{J}}_F^{N_2})$. 
We have that  
$$\underline{\overline{L}}_{N_2}=\left[
\begin{matrix}
8&-10\\
5& -7
\end{matrix}\right] $$
has the eigenvalues $3$ and $-2$ with associated eigenvectors $(2, 1)$ and $(1, 1)$, respectively. So  $\underline{\overline{J}}_F^{N_2}$ has the eigenvalues $0$ and $-5h_1$ and the image of $\underline{\overline{J}}_F^{N_2}$ is generated by $(1, 1)$. Thus $3$ is a simple eigenvalue of $\underline{\overline{L}}_{N_2}$ and $L_{N_2}^c\in Im(\underline{\overline{J}}_F^{N_2})$. The vector $H$ is given by
$$H=\left[
\begin{matrix}
\frac{2}{3} \left(2 g_{xx}+10 h_{12}+11 h_{11}\right)\\
\frac{1}{3} \left(g_{xx}+20 h_{12}+13 h_{11}\right)
\end{matrix}\right] $$
which does not belong to $Im(\underline{\overline{J}}_F^{N_2})$.
Therefore, it follows from Proposition~\ref{prop:extbif2} that the bifurcation branch $b$ for $N_1$ extends to two bifurcation branches in $N$.
\end{exam}

\begin{figure}[!h]
\begin{center}
\begin{tabular}{ccc}
\begin{tikzpicture}
 [scale=.25,auto=left, node distance=1.5cm]
 \node[style={circle,draw}] (n2) at (0,0) {\small{$\overline{2}$}};
  \node[style={circle,draw}] (n1) at (0,8) {\small{1}};
  \path
  (n1) edge[->,bend left=10, thick]  node  {} (n2)
 (n2) edge[->,bend left=10, thick]  node  {{\footnotesize $2$}} (n1); 
\end{tikzpicture}  \qquad & \qquad
\begin{tikzpicture}
 [scale=.25,auto=left, node distance=1.5cm]
 \node[style={circle,draw}] (n3) at (6,8) {\small{3}};
  \node[style={circle,draw}] (n2) at (0,4) {\small{$\underline{2}$}};
 \node[style={circle,draw}] (n4) at (6,0) {\small{4}};
  \path
	(n2) edge[->, thick]  node  {{\footnotesize $-2$}} (n3)
	(n2) edge[->, thick]  node   [below=1pt] {{\footnotesize $-2$}} (n4)
  (n3) edge[->, bend left=10, thick]  node   {{\footnotesize $-5$}} (n4)
	(n4) edge[->, bend left=10, thick]  node   {{\footnotesize $10$}} (n3); 
\end{tikzpicture}  \qquad & \qquad
\begin{tikzpicture}
 [scale=.25,auto=left, node distance=1.5cm]
\node[style={circle,draw}] (n1) at (0,4) {\small{1}};
\node[style={circle,draw}] (n2) at (8,4) {\small{2}};
\node[style={circle,draw}] (n3) at (14,8) {\small{3}};
\node[style={circle,draw}] (n4) at (14,0) {\small{4}};
  \path
  (n1) edge[->,bend left=10, thick]  node  {} (n2)
 (n2) edge[->,bend left=10, thick]  node  {{\footnotesize $2$}} (n1) 
(n2) edge[->, thick]  node  {{\footnotesize $-2$}} (n3)
	(n2) edge[->, thick]  node   [below=1pt] {{\footnotesize $-2$}} (n4)
  (n3) edge[->, bend left=10, thick]  node   {{\footnotesize $-5$}} (n4)
	(n4) edge[->, bend left=10, thick]  node   {{\footnotesize $10$}} (n3); 
 \end{tikzpicture} 
\end{tabular}
\end{center}
\caption{The network $N_1 \circ N_2$  (right) is the coalescence of the two network $N_1$ and $N_2$ (left and center), by merging cells $\overline{2}$ and $\underline{2}$.} 
\label{Fig:ex_gener_5}
\end{figure}
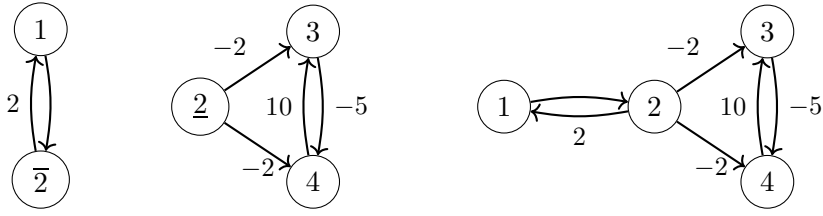

Note that the first part of the result in Proposition~\ref{prop:extbif2} still holds when  $L_{N_2}^c\notin Im(\underline{\overline{J}}_F^{N_2})$ provided that $b'_c(\lambda)=0$, as we exemplify next.

\begin{exam}
Consider the feedforward coalescence network $N= N_1\circ N_2$ given in Figure~\ref{Fig:ex_gener_4}, an $N$-admissible diffusive system $F^N$ such that $g_x+h_1=0$ and $F^{N}(0,0,0,0,0,\lambda)=0$ and the trivial branch $b(\lambda)=0$ in $N_1$.
We saw in Example~\ref{exam:ex_gener_4b} that this branch extends to two branches in $N$: the trivial branch and a bifurcation branch.
Taking the trivial branch, it means that all derivatives of $b_c$ are zero. 
It follows from (\ref{eq:derphilam}) and $\overline{F}^{N_2}_{\lambda^j}=0$ that $\psi_{\lambda^k}=0$.
Therefore, if the eigenvalue $\mu$ is simple in $\underline{\overline{L}}_{N_2}$ and $H\notin Im(\underline{\overline{J}}_F^{N_2})$, we can apply Proposition~\ref{prop:extbif2} to also conclude that the trivial branch in $N_1$ extends to two branches in $N$ (one of them trivial).

The eigenvalue $1$ is simple in $\underline{\overline{L}}_{N_2}$, see Example~\ref{exam:ex_gener_4}. 
The matrix $H$ is equal to the matrix $H$ calculated in Example~\ref{exam:ex_gener_4b}, because the vector $v$ is the same in both cases.
The trivial branch in $N_1$ extends to two branches in $N$, since 
\[H=\begin{bmatrix}g_{xx}+h_{11} \\ g_{xx}+h_{11}\end{bmatrix}\notin Im(\underline{\overline{J}}_F^{N_2}).\]

Remembering the Example~\ref{exam:ex_gener_4b}, we conclude that any generic $N$-admissible diffusive system $F^N$ such that $g_x+h_1=0$ and $F^{N}(0,0,0,0,0,\lambda)=0$ has three bifurcation branches where two of them have a square-root growth rate and one has a linear growth rate.
\end{exam}

\begin{rem}
In the proof of proposition~\ref{prop:extbif2}, if we remove the condition that $H\notin Im(\underline{\overline{J}}_F^{N_2})$, we still have that
$$\psi_y=0,\quad \psi_{\lambda^k}=0, \textrm{ and } \psi_{y\lambda}\neq 0.$$   
Thus the equation $\frac{\psi_{yy}}{2}y+\psi_{y\lambda} \lambda + \mathcal{O}(2)=0$ can still be solved in order to $\lambda$, i.e. there exists $\Lambda(y)$ such that  $\psi(y,\Lambda(y))=0$, by the implicit function theorem.
It does not provide a straightforward bifurcation branch, we still need to reparametrize the curve to understand the number of bifurcation branches and their growth rate.
This reparemetrization can be done for given networks, as we do next, however the assumptions needed for a general result do not enlighten us about the network structure.
\end{rem}

We finish by looking at the network in Figure~\ref{Fig:coal_3-cell_net6} which is the network $6$ in ~\cite{ADGL09}.
In~\cite{ADGL09}, the authors studied this network as a lift of the $3$-cell bidirectional ring network and showed that it supports more bifurcation branches that the ones lifted from the $3$-cell network. 
They describe the bifurcation branches in \cite[Theorem 3.4]{ADGL09}.
Here, we look at the network $6$ as a FFCN and apply our results to study the bifurcation branches.
In this case $H\in Im(\underline{\overline{J}}_F^{N_2})$ and we need to study higher order derivatives.
 
\begin{exam} \label{ex:Fig_6}
Consider the feedforward coalescence network $N= N_1\circ N_2$ given in Figure~\ref{Fig:coal_3-cell_net6}.
The eigenvalue $3$ is shared by the networks $N_1$ and $N_2$, and it is simple in $\underline{\overline{L}}_{N_2}$ and $L_{N_2}^c\in Im((\underline{\overline{L}}_{N_2} - 3I))$, see Example~\ref{exam:net6}.
Let $F^N$ be an $N$-admissible diffusive system with the bifurcation condition $g_x+3h_x=0$ and assume without loss of generality that $F^{N}(0,0,0,0,0,\lambda)=0$. 
The network $N_1$ has $S_3$ symmetry and there are $3$ bifurcation branches for $F^{N_1}$ associated with the bifurcation condition $g_x+3h_x=0$.
The network $N_1$ also supports the trivial equilibrium branch. 
Taking one of these branches and considering the function $\psi(y,\lambda)= \langle v^*,\phi(yv+W(y ,\lambda),\lambda)\rangle$ as defined before, we have that
$$\psi_y=0,\quad \psi_{\lambda^k}=0, \textrm{ and } \psi_{y\lambda}\neq 0,$$
for any $k\in\mathbb{N}$.

We have that  $D^2\phi(v,v)=\frac{g_{xx}-4 h_{12}+h_{11}}{2} (1,1)\in Im(\underline{\overline{J}}_F^{N_2})$, so $\psi_{yy}=0$. 
Moreover, we can calculate the third derivative 
 $$\begin{aligned}
\psi_{yyy}&=\langle v^*, D^3 \phi(v,v,v)+3 D^2\phi (v,W_{yy})+ D\phi(W_{yyy})\rangle\\
&= \frac{6 g_{xx} \left(h_{12}-2 h_{11}\right)+2 g_{xxx} h_1-3 g_{xx}^2+24 h_{12}^2-9 h_{11}^2+12 h_{1} h_{122}+30 h_{12} h_{11}+6 h_{1} h_{111}}{4 h_{1}}\\
&\neq 0,
\end{aligned}$$
where $W_{yy}=-(\underline{\overline{J}}_F^{N_2})^{-1} H$.

Therefore, 
$$\begin{aligned}
\psi(y,\lambda)=0 &\Leftrightarrow \psi_{y\lambda}y\lambda+\frac{\psi_{yyy}}{6}y^3+\frac{\psi_{yy\lambda}}{2}y^2\lambda+\frac{\psi_{y\lambda\lambda}}{2}y\lambda^2 +y\mathcal{O}(3)=0\\
									&\Leftrightarrow y=0\vee (\psi_{y\lambda}\lambda+\frac{\psi_{yyy}}{6}y^2+\frac{\psi_{yy\lambda}}{2}y\lambda+\frac{\psi_{y\lambda\lambda}}{2}\lambda^2 +\mathcal{O}(3))=0.
\end{aligned}$$

Define the function $\tilde{\psi}(y,\lambda)= \psi_{y\lambda}\lambda+\frac{\psi_{yyy}}{6}y^2+\frac{\psi_{yy\lambda}}{2}y\lambda+\frac{\psi_{y\lambda\lambda}}{2}\lambda^2 +\mathcal{O}(3)$. Since $\tilde{\psi}_{\lambda}= \psi_{y\lambda}\neq 0$, we know by the implicit function theorem that there exists a function $\Lambda(y)$ such that  $\tilde{\psi}(y,\Lambda(y))=0$. Differentiating twice the previous equality, we obtain 
$$\Lambda'(0)=0, \quad \quad \Lambda''(0)=-\dfrac{\tilde{\psi}_{yy}}{\tilde{\psi}_{\lambda}}=-\dfrac{\psi_{yyy}}{3\psi_{y\lambda}}\neq 0.$$
Therefore, we can rewrite the solution $(y,\Lambda(y))$ as two branches $(y_1(\lambda),\lambda)$ and $(y_2(\lambda),\lambda)$, for positive or negative values of $\lambda$, depending on the sign of $\Lambda''(0)$. 
Thus for each equilibrium branch of the network $N_1$, we obtain three equilibrium branches on the coalescence network. 

The $3$ bifurcation branches for $F^{N_1}$ extend to $9$ bifurcation branches in $N$. 
Moreover, the trivial equilibrium branch in $N_1$ extends to $2$ bifurcation branches in $N$ and a trivial equilibrium branch in $N$. 
Therefore there are $11$ bifurcation branches for $F^{N}$ and the trivial equilibrium branch.
In \cite[Theorem 3.4]{ADGL09}, the result states that there are $9$ bifurcation branches, however the difference corresponds to a different way of parametrizing and counting the branches.
Here, we consider that a saddle node bifurcation creates two branches and in \cite{ADGL09} the authors count this as one branch.
Moreover, the bifurcation branches have a square-root growth rate. 
\end{exam}

\section{Final remarks}

In this work we consider networks that can be given as the coalescence of subnetworks and analyse the coalescence bifurcation problem -- given a steady-state bifurcation for the coalescence network $N_1 \circ N_2$, can we obtain the information about the bifurcating branches by analysing the bifurcation in one or the two component networks, $N_1$, $N_2$? 
After concluding that this is not possible for general coalescence networks, we restrict to Feedforward Coalescence Networks (FFCNs) and consider admissible diffusive systems.
We show that the answer is positive when the bifurcation occurs in only one of the component networks.
When the bifurcation occurs for both component networks, we give conditions for the bifurcation branches in $N_1$ to extend to bifurcation branches in $N$. Those conditions rely on the coupling structure of $N_2$.
We have shown in Example~\ref{ex:Fig_6} that the methods introduced here apply to the network 6 in \cite{ADGL09}, which is a FFCN, and that we can obtain all the bifurcating branches for the network 6 from those of its component networks.

Our results on the coalescence bifurcation problem consider FFCNs with two component networks but they generalize for FFCNs given by the sequential coalescence of more than two networks.
However, we need to carefully analyse the domain where the bifurcation branches are defined because some extensions can dictate that the bifurcation branch is defined only for positive parameters and a subsequent extension can force the bifurcation branch to be defined only for negative parameters, and vice-versa.
This problem can be addressed by defining an axiomatic set which characterizes the bifurcation branches like it is done in \cite[Proposition 5.3]{S18}.

%


\vspace{5mm}

\noindent {\bf Acknowledgments} \\
MA was partially supported by CMUP, member of LASI, which is financed by national funds through FCT -- Funda\c c\~ao para a Ci\^encia e a Tecnologia, I.P., under the projects with reference UIDB/00144/2020 and UIDP/00144/2020.

\end{document}